\newtheorem{theorem}{\bf{Theorem}}[section]
\newtheorem{corollary}[theorem]{\bf {Corollary}}
\newtheorem{proposition}[theorem]{\bf {Proposition}}
\newtheorem{lemma}[theorem]{\bf {Lemma}}
\theoremstyle{remark}
\theoremstyle{definition}
\newtheorem{definition}[theorem]{\bf {Definition}}
\newcommand{\bs}[1]{\Big\{#1  \Big\}} 
\newcommand{\bp}[1]{\Big(#1  \Big)} 
\newcommand{\PP}{{\mathbb{P}}}
\newcommand{\slr}{\mathrm{SL}(d,\Bbb{R})}
\newcommand{\dist}{\mathrm{dist}}
\DeclareMathOperator{\diag}{diag}
\title[The Poisson boundary of semisimple Lie groups]{The Poisson boundary of discrete subgroups of semisimple Lie groups without moment conditions}
\author{K. Chawla}
\author{B. Forghani}
\author{J. Frisch}
\author{G. Tiozzo}
\address{Princeton University, USA}
\email{kc7106@princeton.edu}
\address{The College of Charleston, USA} 
\email{forghanib@cofc.edu}
\address{University of California at San Diego, USA}
\email{joshfrisch@gmail.com}
\address{University of Toronto, Canada}
\email{tiozzo@math.utoronto.ca}
\begin{document}

\maketitle

\begin{abstract}
We consider random walks of finite entropy on Zariski-dense discrete subgroups of semisimple Lie groups 
and show that their Poisson boundary is identified with the Furstenberg boundary of the corresponding symmetric spaces equipped with the hitting measure. We do not assume any moment condition on the random walk and, in contrast with the previous rank $1$ proof, we do not rely on pivoting theory. 
 \end{abstract}

\section{Introduction} 

The classical \emph{Poisson representation formula} establishes a duality between bounded harmonic functions 
on the Poincar\'e disk $\mathbb{D}$ and bounded, measurable functions on its boundary $\partial \mathbb{D} = S^1$. 
This formula is deeply connected with the geometry of $G = SL_2(\mathbb{R})$, which is the group of automorphisms of $\mathbb{D}$. 

In the 1960s, Furstenberg \cite{Furstenberg-semi} (building on Feller \cite{Feller}, Blackwell \cite{Blackwell1955} and Doob \cite{Doob59}) generalized this theory to other locally compact (in particular, Lie) groups. 
In general, given a locally compact, second countable group $G$ and a spread out measure $\mu$ on it, he showed that there is a measure space $(B, \nu)$ such that a generalization of the Poisson representation formula holds; namely, the \emph{Poisson transform} 
\begin{align*}
L^\infty(B, \nu) & \rightarrow   H^\infty(G, \mu) \\
f  & \mapsto  \varphi(g) := \int_B f  \ d g \nu
\end{align*}
is an isomorphism between the space $L^\infty(B, \nu)$ of bounded, measurable functions on the boundary and the space $H^\infty(G, \mu)$ 
of bounded, $\mu$-harmonic functions on $G$.
We call the space $(B, \nu)$ the \emph{Poisson boundary} of the pair $(G, \mu)$.

Probabilistically, the Poisson boundary can also be seen as the space that naturally encodes all possible asymptotic behaviours of the random walk $w_n := g_1 \dots g_n$,
where the sequence $(g_n)$ is independent and each $g_n$ has law $\mu$, and it can be equivalently defined 
as the space of the ergodic components of the shift map in the path space of the random walk.

Poisson boundaries capture algebraic and analytic properties of a group, for instance a countable group is amenable if and only if it admits 
a measure with trivial Poisson boundary \cite{Kaimanovich-Vershik, Rosenblatt81}. 
In many cases the group $G$ acts by isometries on a metric space $(X, d)$, the space $X$ is equipped with 
a natural topological boundary $\partial X$, and one of the central questions in the field has been the ``identification problem", that is whether the  Poisson boundary coincides with $\partial X$, in the sense that the isomorphism above is realized by setting $B = \partial X$ and $\nu$ to be the hitting measure (see e.g. \cite[Sections 2.4, 2.8]{kaimanovich228boundaries}, \cite[Section 1]{Tianyi-ICM}). 

 If $G$ is a semisimple Lie group and $\mu$ is absolutely continuous with respect to the Haar measure on $G$, 
  then Furstenberg \cite{Furstenberg-semi}  showed that the Poisson boundary of $(G, \mu)$ can be identified with the space $B = G/P$, where $P$ is a minimal parabolic subgroup,  which is known as the \emph{Furstenberg boundary}. See \cite[Sections 2.4-2.6]{Furman} 
 and \cite[Sections 2.6, 6.2]{Babillot} for a survey.

On the other hand, if the support of $\mu$ is countable, these results and techniques do not directly apply; 
as observed in \cite[page 149]{kaimanovich228boundaries}, this is because the automorphism group of the corresponding 
Markov operator is not large enough to act transitively on the candidate Poisson boundary. 

Let now $\mu$ be a countably supported measure on a semisimple Lie group $G$, and suppose 
that the semigroup generated by the support of $\mu$ is a discrete subgroup $\Gamma$. 
If the measure $\mu$ has finite first moment, then Furstenberg \cite{Furstenberg63} showed that almost every sample path 
converges in the boundary, and the drift exists and is positive. 
In this case, Ledrappier \cite{Ledrappier-Lie} and Kaimanovich \cite{Kaimanovich-maximal85} showed that the Poisson boundary of the discrete group, i.e. of 
$(\Gamma, \mu)$, is identified with the Furstenberg boundary $G/P$ of the ambient Lie group $G$. 
This identification allows one to compare Lie groups with the lattices contained in them, yielding 
e.g. rigidity statements, such as that certain abstract groups cannot be lattices in certain Lie groups (\cite{Furstenberg-rigidity}, \cite{Bader-Furman}). 

Later, Kaimanovich  \cite{Ka} extended this identification result, between 
the Poisson boundary of the discrete group and the Furstenberg boundary 
of the ambient Lie group, to the more general case that $\mu$ has finite entropy and finite logarithmic moment. In order to do so, he devised two general criteria 
to prove the identification of the Poisson boundary with the geometric boundary, known as the  \emph{ray approximation} and the \emph{strip approximation}. These criteria have been applied in a wealth of contexts, 
especially to groups acting on spaces whose geometry is, in various ways, nonpositively curved: a few examples are 
\cite{
Ballmann-Ledrappier, 
Kaimanovich-hyp94,
Cartwright-Kaimanovich-Woess, Kaim-Masur, KM, 
Kaimanovich-Woess,  Frederic, Horbez, Maher-Tiozzo}, among others. 
However, these criteria require some moment condition on the measure $\mu$ (finite first moment for ray approximation, 
finite logarithmic moment for the strip approximation) to control the geometry of the random walk.

In this paper, we drop every moment condition, and identify the Poisson boundary for random walks on discrete subgroups of semisimple Lie groups
assuming just that the entropy is finite.
Note that if $\mu$ has finite first moment, then the random walk almost surely converges in the visual compactification of the symmetric space $S = G/K$,  where $K$ is a maximal compact subgroup. On the other hand, if one drops the moment condition, the random walk still converges, 
but in a weaker sense, i.e. in the Furstenberg-Satake compactification, whose boundary is $G/P$  \cite{Guivarch-Raugi85}. 
Yet, this is sufficient to define a boundary map, hence it makes sense to ask whether the resulting topological boundary is the Poisson boundary. 

Our main theorem is as follows.

\begin{theorem} \label{T:main-intro}
Let $G$ be a semisimple, connected, Lie group with finite center  and no compact factors, and let $\Gamma < G$ be a discrete, Zariski-dense subgroup. 
Let $\mu$ be a probability measure on $\Gamma$ with finite entropy, such that its support generates $\Gamma$ as a semigroup.  
Then the Furstenberg boundary $(G/P, \nu)$ with the hitting measure $\nu$ is the Poisson boundary for $(\Gamma, \mu)$. 
\end{theorem}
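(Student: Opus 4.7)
My strategy is to identify $(G/P,\nu)$ as the Poisson boundary through Kaimanovich's conditional entropy criterion, replacing the moment-dependent strip approximation in the symmetric space with a strip constructed intrinsically on the flag variety.

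The first step is to fix the boundary map. By the Guivarch-Raugi theorem, under Zariski density the product $w_n = g_1 \cdots g_n$ converges $\PP$-almost surely in the Furstenberg-Satake compactification to a random flag $\mathrm{bnd}(\omega) \in G/P$, whose distribution $\nu$ is the unique $\mu$-stationary measure on $G/P$. Shift-equivariance of $\mathrm{bnd}$ then gives a measurable $\mu$-boundary. Since $\mu$ has finite entropy, Kaimanovich's criterion reduces maximality to the vanishing of the conditional entropy $h(\mu \mid G/P,\nu)$; equivalently, for $\nu \otimes \check\nu$-a.e. pair $(\xi^+,\xi^-)$ of forward/backward boundary points one must exhibit a subset $S(\xi^+,\xi^-) \subset \Gamma$ which (up to translates) eventually contains $w_n$ and whose $\mu^{*n}$-mass decays at most subexponentially.

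In the absence of a moment, I cannot define such a strip via geodesic tracking in the symmetric space $G/K$. Instead, I would define $S(\xi^+,\xi^-)$ dynamically on the Furstenberg boundary: using Zariski density (and the associated proximality and simplicity results of Goldsheid-Margulis and Benoist), let $S(\xi^+,\xi^-)$ consist of those $g \in \Gamma$ whose action on $G/P$ carries a small, fixed neighborhood of the transverse flag $\xi^- \in G/P^-$ into a small neighborhood of $\xi^+$ at a definite contraction rate, measured intrinsically on $G/P$. The joint convergence of forward and backward walks to transverse flags supplies the eventual containment; the subexponential bound on $\mu^{*n}(S)$ should follow from compactness of $G/P$, polynomial growth of its $\varepsilon$-covers, and the finite-entropy assumption.

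The main obstacle is precisely this last point. Without any moment, $w_n$ may wander arbitrarily far in $\Gamma$, so the estimate on $\mu^{*n}(S)$ cannot be read off from geometric statistics of the walk in $G/K$; it must be extracted from the compact dynamics of $\Gamma$ on $G/P$ together with the ergodicity of $\nu \otimes \check\nu$ under the diagonal $\Gamma$-action. The key mechanism I expect to exploit is that, in higher rank with Zariski-dense $\Gamma$, the asymptotic behaviour of the walk is already encoded in the flag data at all scales, so that finite entropy of $\mu$ upstairs can be translated into a usable covering estimate downstairs on the compact variety $G/P$—an argument that is specific to the semisimple setting and bypasses any need for pivoting.
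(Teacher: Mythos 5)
Your setup (forward/backward convergence to transverse flags via Guivarch--Raugi, reduction to Kaimanovich's entropy criterion) matches the paper's, but the core of your argument has a gap that your own ``main obstacle'' paragraph does not close. The contraction data you propose to use lives on the compact variety $G/P$, and such data determines an element $g\in\Gamma$ only up to the stabilizer of the pair of transverse flags, which is the \emph{noncompact} group $AM\cong\mathbb{R}^{d-1}\times M$. Concretely, the set $S(\xi^+,\xi^-)$ of $g\in\Gamma$ carrying a neighborhood of $\xi^-$ into a neighborhood of $\xi^+$ at a definite rate contains elements of arbitrarily large Cartan projection (translates along the flat joining $\xi^\pm$), so no covering estimate on the compact $G/P$, however fine, cuts $S$ down to subexponentially many candidates for $w_n$; and without a moment condition there is no a priori gauge ball in $\Gamma$ to intersect $S$ with, which is exactly why the classical strip criterion needs a finite logarithmic moment. (Also, ``$\mu^{*n}(S)$ decays subexponentially'' is not the right quantity: if $w_n\in S$ eventually a.s.\ then $\mu^{*n}(S)\to 1$; what must be subexponential is the number of candidate values of $w_n$ given the boundary pair, i.e.\ $-\log \PP^{(\xi^-,\xi^+)}(w_n=\cdot)=o(n)$.)

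The paper resolves precisely this $\mathbb{R}^{d-1}$-worth of ambiguity, and does so not by shrinking a strip but by \emph{revealing extra information} of total entropy at most $\epsilon n$: (i) the set of critical times at which the walk comes within a uniform distance $M$ of the flat $\Phi(\xi^-,\xi^+)$, whose abundance follows from a soft recurrence/ergodicity argument for the shift on bilateral paths (no exponential estimates, no pivoting); (ii) all increments inside the sparse bad intervals; (iii) the integer part of the summed Cartan projections onto the flat between consecutive critical times in good intervals, which takes only polynomially many values precisely because the flat is a copy of $\mathbb{R}^{d-1}$; and (iv) one Weyl-group element per non-doubly-good index. Given the boundary pair and these data, $w_n$ is reconstructed up to $O((n/\alpha)^{d-1})$ choices, and Kaimanovich's criterion applies. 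To salvage your approach you would need to supplement the flag-variety contraction data with some record of the position along the flat; that is the missing idea, and it is where the whole difficulty of the no-moment setting is concentrated.
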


In fact, the condition that $\Gamma$ is Zariski-dense may be relaxed a bit: see Theorem \ref{T:main} for the most general statement.

We note that finite entropy is a necessary assumption for our result and not just a technical artifact. 
Poisson boundaries exhibit radically different phenomena in the presence of finite entropy as opposed to the general case; for example, in infinite entropy there are measures $\mu$ with trivial boundary such that $\mu^{-1}$  has a nontrivial boundary (\cite{Kaimanovich83-examples}, \cite {Alpeev2021examples}), examples of measures on products of groups $G\times H$ whose boundary is nontrivial despite the quotient boundaries on $G$ and $H$ being trivial (\cite{Kaimanovich2024}, \cite{alpeev2024secret}) and, most relevantly for the current project, it is in general impossible to identify a universal space as the Poisson boundary for any group of superpolynomial growth \cite[Theorem 1.4]{Chawla-Frisch}. In particular our result is false in general if we do not include the finite entropy assumption, as there are measures $\mu$, for instance on discrete subgroups of $\textup{SL}(2, \mathbb{R})$, for which 
the Poisson boundary is larger than the geometric boundary \cite{Chawla-Frisch}.


Theorem~\ref{T:main-intro} is the higher rank analog of the main theorem of  \cite{Chawla-Forghani-Frisch-Tiozzo22p}, that for hyperbolic groups, and more generally acylindrically hyperbolic groups,  establish the identification of the Poisson boundary with the Gromov boundary under finite entropy and without moment conditions. 
 
In the footsteps of \cite{Chawla-Forghani-Frisch-Tiozzo22p}(and also \cite{frisch2023poisson}), we replace the \emph{strip} \emph{approximation} from \cite{Ka} by what we call the \emph{pin-down} \emph{approximation}: 
 namely, for any $\epsilon > 0$ one identifies a sequence $(p_n)$ of partitions  of the path space, which can be interpreted as revealing some additional information on the walk, such that: 
 \begin{enumerate}
 \item if $(B, \nu)$ is a $\mu$-boundary, revealing both the boundary point $\xi \in B$ and the outcome of $p_n$ determines, or ``pins down", the location $w_n$, with a subexponential error: this shows that the conditional entropy satisfies $H_B(w_n \vert p_n) = o(n)$;
 
 \item the total information contained in revealing $p_n$ is still small: $H(p_n) \leq \epsilon n$. 
 \end{enumerate} 
 These two facts together imply that the entropy of the random walk conditional to the boundary is sublinear, thus showing that the desired boundary is the Poisson boundary. 
 
 Then, the challenge becomes to construct carefully the partitions $(p_n)$, by revealing certain information on the random walk (e.g., the distance from the origin), and at this step new techniques are required. 
 

In our previous work \cite{Chawla-Forghani-Frisch-Tiozzo22p}, we made crucial use of the  \emph{pivoting theory} of Gou\"ezel \cite{pivot}.
 In fact, in order to construct the partitions $(p_n)$ we kept track of certain times, called \emph{pivotal times}, when the random walk is aligned with the limiting geodesic
 in a fairly strong sense. 
The existence and abundance of such times is there guaranteed by \cite{pivot}, 
that shows strong exponential bounds on the probability of finding such pivots: 
such quantitative estimates were heavily used in \cite{Chawla-Forghani-Frisch-Tiozzo22p}. 
Let us note that an analogue to pivoting theory in higher rank Lie groups has been established in the recent exciting work \cite{Peneau}, but 
it is not clear if it can be applied, and we do not use it in this paper. In fact, our main theorem (Theorem~\ref{T:main-intro}) is stated there as a conjecture (\cite[Conjecture 1.12]{Peneau}). 

In the current paper, instead, we take a different route, avoiding pivoting theory and the corresponding exponential bounds altogether. We replace pivotal times by all times where the random walk is within some bounded distance of the flat; abundance of such times, that we call \emph{critical times}, is guaranteed by a simple recurrence argument using the ergodicity of the shift, without the need for the strong exponential bounds given by pivoting theory (see Lemma \ref{lem:critical}). Let us note that this softer approach would also provide a simpler proof in the rank 1 case.


Moreover, the second difficulty is that in higher rank one needs to replace the limiting geodesic ray by a flat, and one cannot, in contrast with the rank one case, associate a flat to just one boundary point. Therefore, we need to run the random walk both forwards and backwards, and then consider the flat joining the boundary points of the two walks. The pin down partition is then constructed by replacing the distance along a geodesic ray 
by the projection of the random walk onto the flats, using the Cartan projection. 
The fact that flats are translates of abelian groups and have polynomial growth then makes it possible to estimate the conditional entropy, 
concluding the argument. 


\subsection*{Acknowledgements}

B.F. is partially supported by NSF grant DMS-2246727, J.F. is partially supported by NSF grant DMS-2348981 and G.T. is partially supported by NSERC grant RGPIN-2024-04324. We thank Vadim Kaimanovich and Keivan Mallahi-Karai for useful conversations. 

\section{Semisimple Lie groups} \label{S:ss}

Let us start by recalling some fundamental definitions about Lie groups; for details, see e.g. \cite[Section 1]{Guivarch-Raugi85}.

Let $G$ be a semisimple, connected, Lie group with finite center, let $\mathfrak{g}$ be the Lie algebra of $G$, and let $\mathfrak{a}$ be a Cartan subalgebra, with associated Cartan subgroup $A < G$. 

The symmetric space associated to $G$ is the quotient $S = G/K$, where $K$ is a maximal compact subgroup. We will take as a base point of $S$ 
the coset corresponding to $K$, and denote it as $o$. 

When $G = \textup{SL}(d, \mathbb{R})$, we have that $\mathfrak{g}$ is the set of matrices with zero trace, $\mathfrak{a}$ is the set 
of diagonal matrices with zero trace, and $A$ is the group of diagonal matrices with determinant $1$ and positive entries on the diagonal, hence $A \cong \mathbb{R}^{d-1}$. 

A \emph{root} $\alpha$ is a linear map $\alpha : \mathfrak{a} \to \mathbb{R}$ such that the eigenspace
$$\mathfrak{g}_\alpha := \{X \in \mathfrak{g} \ : \  [X, H] = \alpha(H) X \ \forall H \in \mathfrak{a} \}$$
contains a non-zero vector. Let $\Delta$ denote the set of roots, so that we have the decomposition $\mathfrak{g} = \bigoplus_{\alpha \in \Delta} \mathfrak{g}_\alpha$.

A \emph{Weyl chamber} is a connected component of the subset $\mathfrak{a}' \subseteq \mathfrak{a}$ where no non-trivial root vanishes.
Fix a  Weyl chamber $\mathfrak{a}^+ \subseteq \mathfrak{a}$, and let us denote $A^+ := \exp(\mathfrak{a}^+) < G$. Moreover, a root $\alpha$ is called \emph{positive} if $\alpha(H) > 0$ for any $H \in \mathfrak{a}^+$, 
and \emph{negative} if $\alpha(H) < 0$ for any $H \in \mathfrak{a}^+$.

For $G = \textup{SL}(d, \mathbb{R})$, we have that $K = \textup{SO}(d, \mathbb{R})$ and one can take as $\mathfrak{a}^+$ the set of diagonal matrices 
with trace zero and strictly decreasing diagonal entries; then $A^+$ is the set of diagonal matrices with positive diagonal entries in strictly decreasing order and determinant $1$.  

Let 
$$\mathfrak{n} := \bigoplus_{\alpha < 0} \mathfrak{g}_\alpha, \qquad \tilde{\mathfrak{n}} := \bigoplus_{\alpha > 0} \mathfrak{g}_\alpha$$
and $N$, $\tilde{N}$ be the corresponding connected Lie subgroups of $G$. 
For $G = \textup{SL}(d, \mathbb{R})$, $N$ is the subgroup of upper triangular matrices with $1$s on the diagonal, and $\tilde{N}$ is 
the subgroup of lower triangular matrices with $1$s on the diagonal. 

Let $M$ be the centralizer of $A$ in $K$, i.e. $M = \{ m \in K \ : \ m a m^{-1} = a \ \forall a \in A\}$, and let $M'$ be the normalizer of $A$ in $K$, i.e. $M' = \{ m \in K \ : \ m A m^{-1} = A \}$.
The group $W = M'/M$ is a finite group called the \emph{Weyl group}. 
Moreover, if $G = \textup{SL}(d, \mathbb{R})$,  $M$ is the subgroup of diagonal matrices with $\pm 1$ on the diagonal, while $W$ can be thought of as the group 
of permutation matrices, hence $W \cong S_d$, and $M' = M W$.

\subsection{The polar and Bruhat decompositions}

For $G = \textup{SL}(d, \mathbb{R})$, we have the following well-known decomposition. 

\begin{lemma}[\bf Polar decomposition $G=KAK$]
For $g \in\slr$ there exist  orthogonal matrices $k_1, k_2$ and a unique $a=\diag(a_1,\cdots,a_d)\in A$ such that 
$a_1\geq a_2 \geq \cdots \geq a_d>0$ and 
$$
g=k_1 a k_2.
$$ 
Moreover, all other polar decompositions of $g$ are obtained by replacing $(k_1, k_2)$ with $(k_1 m,  m^{-1} k_2)$ for some $m \in M$. 
\end{lemma}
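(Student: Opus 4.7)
The plan is to prove this standard polar (or singular value) decomposition by reducing to the spectral theorem applied to $g^T g$. First, given $g \in \slr$, I would form the matrix $g^T g$. Since $g$ is invertible, this matrix is symmetric and positive definite, so by the spectral theorem there exists an orthogonal matrix $k_2$ and a diagonal matrix $D = \diag(d_1, \ldots, d_d)$ with $d_1 \geq d_2 \geq \cdots \geq d_d > 0$ such that $g^T g = k_2^T D k_2$. Setting $a := \diag(\sqrt{d_1}, \ldots, \sqrt{d_d})$, we get $a_1 \geq a_2 \geq \cdots \geq a_d > 0$ and $a^2 = D$.

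Next, define $k_1 := g k_2^T a^{-1}$. A direct computation gives
$$k_1^T k_1 = a^{-1} k_2 g^T g k_2^T a^{-1} = a^{-1} k_2 (k_2^T a^2 k_2) k_2^T a^{-1} = I,$$
so $k_1$ is orthogonal, and by construction $k_1 a k_2 = g$. Since $\det g = 1$ and the $a_i$ are positive we have $\det a > 0$, which forces $\det k_1 \det k_2 > 0$; if both happen to equal $-1$, we can multiply $k_1$ on the right and $k_2$ on the left by the diagonal matrix $m = \diag(-1, 1, \ldots, 1) \in M$, which commutes with $a$, to land inside $\so$.

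For uniqueness of $a$, observe that the diagonal entries of $a^2$ are precisely the eigenvalues of $g^T g$, counted with multiplicity, and imposing the decreasing order pins them down; taking positive square roots yields a unique $a$. For the ambiguity in $(k_1, k_2)$, suppose $g = k_1 a k_2 = k_1' a k_2'$; then $u := k_1^{-1} k_1' = a (k_2 (k_2')^{-1}) a^{-1}$, and setting $v := k_2 (k_2')^{-1}$ we have $u, v \in K$ with $u = a v a^{-1}$. Taking transposes and using $u^T = u^{-1}$, $v^T = v^{-1}$ yields $a^{-2} v a^2 = v$, i.e.\ $v$ commutes with $a^2$, and hence with $a$; therefore $u = v$, and this common element $m = u = v$ lies in the centralizer of $A$ in $K$, which is $M$. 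Writing $k_1' = k_1 m$ and $k_2' = m^{-1} k_2$ gives the claimed description of the fibers.

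The step I expect to be the cleanest is existence, since SVD is textbook. The subtlety worth flagging is the last uniqueness step: the argument that $v$ commutes with $a$ goes through verbatim when $a$ has distinct diagonal entries (the regular case), and one then uses that the full Cartan subgroup $A$ of positive-entry diagonal matrices is hit elsewhere in the orbit, so any element of $K$ commuting with $a$ automatically commutes with all of $A$ and hence lies in $M$; in the non-regular case one must invoke that $M$ is exactly the centralizer of $A$, not of a single element $a$, to conclude the fiber is the full $M$-orbit as stated.
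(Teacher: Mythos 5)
The paper does not prove this lemma at all --- it is stated as a ``well-known decomposition'' and used only through the radial part $r(g)$ --- so there is no argument of the authors' to compare against. Your proof is the standard one (spectral theorem for $g^Tg$, i.e.\ SVD) and the existence and uniqueness-of-$a$ parts are correct: the computation $k_1^Tk_1 = a^{-1}k_2\,g^Tg\,k_2^Ta^{-1} = I$ is right, and the ordered positive square roots of the eigenvalues of $g^Tg$ determine $a$ uniquely. The determinant adjustment by $\diag(-1,1,\dots,1)$ is only needed if one insists on $k_1,k_2\in \so$ (the lemma says ``orthogonal''), and strictly speaking that matrix has determinant $-1$ so it is not in $M\subset K=\so$; this is cosmetic.

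The one substantive point is the fiber description, and you have correctly located it but your proposed patch does not work. Your argument shows exactly that $m=u=v$ lies in the centralizer of the single element $a$ inside $K$. When $a$ is regular (distinct diagonal entries) this centralizer is $M$ and the ``Moreover'' clause follows. When $a$ is not regular the centralizer of $a$ in $K$ is strictly larger than $M$, and the fiber genuinely is that larger group: for $g=e$ one has $a=I$ and every pair $(k,k^{-1})$ with $k\in K$ is a polar decomposition, which is not of the form $(m,m^{-1})$ with $m\in M$. So no appeal to ``$M$ is the centralizer of all of $A$'' can rescue the statement in the degenerate case --- the ``Moreover'' clause as written is simply false there, and the correct statement is that the ambiguity is the centralizer of $a$ in $K$, which equals $M$ precisely when $a\in A^+$ is regular. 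This is a standard imprecision in the lemma itself rather than a gap in your argument, and it is harmless for the paper, which only ever uses the radial part $r(g)$; but you should state the fiber claim only for regular $a$ rather than suggest the general case can be salvaged.
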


Recall that the \emph{singular values} of a matrix $g$ are the square roots of the eigenvalues of $g^tg$; we denote them as 
$(\sigma_1(g),\dots, \sigma_d(g))$, where we order them so that $\sigma_1(g) \geq \sigma_2(g) \geq \dots \geq \sigma_d(g) \geq 0$.
The entries of $a$ in the polar decomposition of $g$ are the singular values $\{ \sigma_1(g), \dots,  \sigma_d(g) \}$, in some order.

\medskip

In general, let $K < G$ be a maximal compact subgroup, and fix a Weyl chamber $\mathfrak{a}^+ \subseteq \mathfrak{a}$.
The \emph{radial part} of an element $g \in G$ is the unique element $r(g) \in \overline{\mathfrak{a}^+}$ 
such that 
\begin{equation} \label{E:radial}
g = k_1 \exp(r(g)) k_2 \qquad \textup{with }k_1, k_2 \in K.
\end{equation}
As in the linear case, all other such decompositions of $g$ are obtained by replacing $(k_1, k_2)$ with $(k_1 m,  m^{-1} k_2)$ for some $m \in M$. 

\medskip
The following definition \cite[Def. 2.1]{Guivarch-Raugi85}  is essential to guarantee convergence of the random walk. 

\begin{definition}
A sequence $(g_n)_{n \geq 0}$ of elements of $G$ is \emph{contracting} if 
$$\lim_{n \to \infty} \alpha(r(g_n)) = + \infty$$ 
for any positive root $\alpha$. 
Moreover, a semigroup $T < G$ is \emph{contracting} if it contains a contracting sequence. 
\end{definition}

In the case of $G = \textup{SL}(d, \mathbb{R})$, if we let $r_i(g) := \log \sigma_i(g)$ the logarithms of the singular values, a semigroup $T < G$ 
is contracting if 
$$\sup_{g \in T} |r_i(g) - r_{i+1}(g)| = + \infty$$
for any $i = 1, \dots, d-1$.

Let $P = \tilde{N} A M$, which is a maximal amenable subgroup. 
The \emph{Bruhat decomposition} 
$$G = \bigsqcup_{m \in W} N m P$$
is a partition of $G$ in finitely many sets, or \emph{elements}; 
we call \emph{non-degenerate} the element corresponding to $m = e$, which has maximal dimension, 
and \emph{degenerate} all other elements. 

\begin{definition} \cite[Def. 2.5]{Guivarch-Raugi85}
A subgroup $H$ of $G$ is \emph{totally irreducible} if no conjugate of $H$ is contained in the union of finitely many left translates of degenerate elements 
of the Bruhat decomposition. 
\end{definition}

In the case of $G = \textup{SL}(d, \mathbb{R})$, a subgroup $H$ which does not leave invariant any finite union of proper subspaces of $\bigwedge^k \mathbb{R}^d$ for any $k \in \{1, \dots, d-1\}$ is totally irreducible.

\subsection{The Furstenberg boundary} 

\begin{definition}
The \emph{Furstenberg boundary} of the symmetric space $S=G/K$ is the quotient $B=G/ P$. 
\end{definition}

For $G = \textup{SL}(d, \mathbb{R})$, the Furstenberg boundary $G/P$ can be identified as the space of full flags as follows. 
A \emph{full flag} in $\mathbb{R}^d$ is a sequence $V_0 \subset V_1 \subset \dots \subset V_d$ of nested subspaces of $\mathbb{R}^d$, with
$\textup{dim }V_i = i$ for any $0 \leq i \leq d$.  
The \emph{standard flag} is $b^{\uparrow} := (V_i)_{i \leq d}$ with $V_i := \mathbb{R} e_1 \oplus \dots \oplus \mathbb{R} e_i$, while the \emph{opposite standard flag} is $b^{\downarrow} := (W_i)_{i \leq d}$ with $W_i := \mathbb{R} e_{d-i+1} \oplus \dots \oplus \mathbb{R} e_d$.
The group $G = \textup{SL}(d, \mathbb{R})$ acts transitively on the set of full flags, and the stabilizer of the standard flag is the group $P$ 
of upper triangular matrices. Thus, the space of full flags can be identified with $B = G/P$. 

\medskip
By \cite[Lemma 4.1]{Mostow78}, the Furstenberg boundary $B=G/P$ can also be identified as the set of asymptotic classes of Weyl chambers in the symmetric space $S$, where we declare two Weyl chambers to be equivalent if they are within a bounded distance from each other. 

\subsection{Flats and boundary points}

A \emph{flat} in the symmetric space $S = G/K$ is a totally geodesic subspace isometric to $\mathbb{R}^k$ for some $k \geq 1$. 
The \emph{rank} $k_0$ of $S$ is the maximal dimension of a flat; for compatibility with the special linear case, we define $d := k_0 + 1$ so that 
the rank equals $k_0 = d-1$. 
The \emph{standard flat} in $S$ is the orbit $A.o$ of the Cartan subgroup and is diffeomorphic to $\mathbb{R}^{d-1}$. 
Since $G$ acts transitively on the set of maximal flats, each maximal flat is of the form $g A .o$ for some $g \in G$. 

An \emph{oriented flat} is a pair $(f, [w])$ where $f$ is a flat in $S$ and $[w]$ is the asymptotic class of a Weyl chamber $w$ contained in $f$.
Let $\mathcal{F}$ be the set of oriented flats in $S$. 

The \emph{standard oriented flat} is the pair $(A.o, [A^+.o])$. The group $G$ acts transitively on the set of oriented flats, and 
the stabilizer of the standard oriented flat is $AM$, hence the space $\mathcal{F}$ of oriented flats can be identified with $G/AM$.

\medskip
The product $B \times B$ is stratified in $G$-orbits. For any $w \in W$, let us denote as $\mathcal{O}_w \subseteq B \times B$ the $G$-orbit of $(P, wP)$.
Let $w_0 \in W$ be the involution that inverts the orientation of the standard flat.
The only orbit of maximal dimension is $\mathcal{O}_{w_0} = G.(b^{\uparrow}, b^{\downarrow})$, which coincides, in the case of $\textup{SL}(d, \mathbb{R})$, with the set of transverse flags. 
Two full flags $b_1 = (E_i)_{i \leq d}$ and $b_2 = (F_i)_{i \leq d}$ are \emph{transverse} if $E_i \cap F_{d-i} = \{0 \}$ for each $0 \leq i \leq d$. 

\medskip
Note moreover that the stabilizer of the pair $(P, w_0 P)$ is $P \cap w_0 P w_0 = AM$, so we can also identify $\mathcal{O}_{w_0} = G/AM$. 
Thus, by sending $(P, w_0 P)$ to $A.o$ and extending the map by $G$-equivariance, we obtain a $G$-equivariant bijection 
\begin{equation} \label{E:flags-to-flats}
\Phi: \mathcal{O}_{w_0} \to \mathcal{F}.
\end{equation}

In the case of $\textup{SL}(d, \mathbb{R})$, we can define the map $\Phi$ as follows. Given two transverse flags $b_+ = (V_i)_{i \leq d}$, $b_- = (W_i)_{i \leq d}$, for any $1 \leq i \leq d$ we set $E_i := V_i \cap W_{d-i+1}$, which is a one-dimensional subspace. 
Then let $g \in G$ such that $g e_i \in E_i \setminus \{0 \}$ for any $1 \leq i \leq d$. Then set $\Phi(b_-, b_+) := gA.o$. 

\subsection{A generalized distance}  

For any $g \in G$, recall that we have defined in Eq. \eqref{E:radial} the \emph{radial part} $r(g) \in \overline{\mathfrak{a}^+} \subseteq \mathfrak{a}$.
Recall that $\mathfrak{a}$ is endowed with a euclidean norm $\Vert \cdot \Vert_2$, induced by the Killing form. 
Using the radial part, we can now define a ``generalized distance" on $G/K$, with values in the Cartan subalgebra, 
and see some of its basic properties. 

\begin{lemma} \label{D-properties}
Let us define $D : G/K \times G/K \to \mathfrak{a}$ as
$$D(gK, hK) := r(g^{-1} h).$$
This function is well-defined, and has the following two properties: 
\begin{enumerate}
\item $G$-invariance:
$D(g g_1, g g_2) = D(g_1, g_2)$
for any $g, g_1, g_2 \in G$.
\item Lipschitz property:
$$\Vert D(g_1, h) - D(g_2, h) \Vert_2 \leq \Vert D(g_1, g_2) \Vert_2 $$
for any $g_1, g_2, h \in G$.
Similarly, 
$$\Vert D(h, g_1) - D(h, g_2) \Vert_2 \leq   \Vert D(g_1, g_2) \Vert_2$$
for any $g_1, g_2, h \in G$.
\end{enumerate}
\end{lemma}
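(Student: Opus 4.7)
I would handle the three claims in turn, with well-definedness and $G$-invariance being essentially formal and only property (2) requiring real content.

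To check that $D$ descends to $G/K \times G/K$, the key point is that the Cartan projection $r$ is bi-$K$-invariant: if $g = k_1 \exp(r(g)) k_2$ is a polar decomposition as in \eqref{E:radial}, then $kgk' = (kk_1)\exp(r(g))(k_2k')$ is again one, so uniqueness of $r(g) \in \overline{\mathfrak{a}^+}$ forces $r(kgk') = r(g)$ for all $k, k' \in K$. Applied to $(gk)^{-1}(hk') = k^{-1}(g^{-1}h)k'$, this shows that $r(g^{-1}h)$ depends only on the cosets $gK$ and $hK$. Property (1) is then immediate by cancellation: $D(gg_1, gg_2) = r((gg_1)^{-1}gg_2) = r(g_1^{-1}g_2) = D(g_1, g_2)$.

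For (2), the main geometric input is the identity $\|r(g)\|_2 = d_S(o, g.o)$, where $d_S$ is the $G$-invariant Riemannian distance on $S = G/K$ induced by the Killing form; this holds because $\exp(r(g)).o$ is joined to $o$ by a geodesic of length $\|r(g)\|_2$ in the standard flat $A.o$, while the $K$-factors in \eqref{E:radial} are isometries fixing $o$. Combined with property (1), this yields $\|D(g_1, g_2)\|_2 = d_S(g_1.o, g_2.o)$. Setting $a = g_1^{-1}g_2$ and $b = g_2^{-1}h$, the first Lipschitz inequality becomes
\begin{equation*}
\|r(ab) - r(b)\|_2 \leq \|r(a)\|_2,
\end{equation*}
while the second, via the analogous substitution together with $\|r(g^{-1})\|_2 = \|r(g)\|_2$, reduces to the right-multiplication analogue $\|r(bc) - r(b)\|_2 \leq \|r(c)\|_2$.

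I expect this reduction to be routine, and the main obstacle to be the two displayed inequalities, which together express the fact that the Cartan projection $gK \mapsto r(g)$ is $1$-Lipschitz as a map from $(S, d_S)$ to $(\overline{\mathfrak{a}^+}, \|\cdot\|_2)$. This is a classical fact from the geometry of Riemannian symmetric spaces and can be established by combining two ingredients: (i) the quotient map $S \to K\backslash S$ is $1$-Lipschitz since $K$ acts on $S$ by isometries, and (ii) under the polar-coordinate identification $K\backslash S \cong \overline{\mathfrak{a}^+}$, the quotient metric coincides with the Euclidean metric on $\overline{\mathfrak{a}^+}$ inherited from the Killing form -- a convexity statement about the Weyl chamber that ultimately rests on Kostant's convexity theorem. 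Once the left-multiplication inequality is in hand, the right-multiplication version follows by applying it to $(ab)^{-1} = b^{-1}a^{-1}$ together with the identity $r(g^{-1}) = -w_0 \cdot r(g)$, where $w_0 \in W$ acts as a linear isometry of $(\mathfrak{a}, \|\cdot\|_2)$.
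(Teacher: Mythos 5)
Your proposal is correct and follows essentially the same route as the paper: well-definedness and $G$-invariance are verified formally via the bi-$K$-invariance of the radial part $r$, and the Lipschitz property is reduced to the classical $1$-Lipschitz estimate for the Cartan projection, which the paper simply quotes from the literature (Kassel, Lemma 2.3) rather than reproving. Your additional observations — that the two displayed inequalities reduce to $\Vert r(ab)-r(b)\Vert_2\leq\Vert r(a)\Vert_2$ and $\Vert r(bc)-r(b)\Vert_2\leq\Vert r(c)\Vert_2$, and that these are interchanged by the opposition involution $r(g^{-1})=-w_0\cdot r(g)$, which is a linear isometry of $\mathfrak{a}$ — are accurate and supply detail the paper omits.
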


\begin{proof}
To show that the function is well-defined on $G/K \times G/K$, note that, if $g_1 = g k_1$ and $h_1 = h k_2$ with $k_1, k_2 \in K$, then 
$$r(g_1^{-1} h_1) = r(k_1^{-1} g^{-1} h k_2) = r(g^{-1} h).$$ 
Now, part (1) is clear by definition. 

Part (2) is also well-known, for a proof see e.g. \cite[Lemma 2.3]{Kassel}. 

\end{proof}

Note that the Riemannian distance $\dist$ on the symmetric space $S = G/K$ is 
\begin{equation} \label{E:distance}
\dist(g_1 K, g_2 K) = \Vert D(g_1 K, g_2 K) \Vert_2 = \Vert r(g_1^{-1} g_2) \Vert_2.
\end{equation}

\section{Random walk, Poisson boundary and Entropy} 

Let $\Gamma$ be a countable group equipped with a probability measure $\mu$. 
Given a probability measure $\theta$ on $\Gamma$, we define the \emph{random walk} driven by $\mu$ with initial distribution $\theta$ 
as the process $(w_n)_{n \geq 0}$ defined as 
$$
w_n := g_0g_1\cdots g_n,
$$
where $(g_n)_{n \geq 0}$ is a sequence of independent random variables, $g_0$ has distribution $\theta$ and each $g_n$ for $n \geq 1$ 
has distribution $\mu$. 
We call a \emph{sample path} an infinite sequence $\omega = (w_n)_{n \geq 0}$ and we denote by $\Omega$ the space of such infinite sequences, and by $\PP_\theta$ the corresponding measure on $\Omega$. 

The sequence $(g_n)_{n\geq 1}$ is called the sequence of \emph{increments} of the sample path $\omega =(w_n)_{n\geq 0}$. When $\theta$ is concentrated on the identity element $e$ of $\Gamma$, that is $\theta=\delta_e$, we write $\PP=\PP_{\delta_e}$.
For a sample path $\omega =(w_n)_{n\geq0}$, we define $U(\omega) := (w_1^{-1}w_{n+1})_{n\geq1}$ as the \emph{shift in increments}. Consequently, the $i^\text{th}$-iterate of the shift in increments is $U^i(\omega)=(w_i^{-1}w_{n+i})_{n\geq 1}$. Note that $U$ is measure-preserving and ergodic. 

\subsection{Poisson boundary} 

Let $\mu$ and $\theta$ be two probability measures on $\Gamma$ such that $\theta(g)>0$ for every $g$ in $\Gamma$. Consider the space of sample paths $(\Omega,\PP_{\theta})$. Two sample paths $(w_n)_{n\geq0}$ and $(w'_n)_{n\geq0}$ are equivalent when there exist $k$ and $k'$ such that $w_{n+k}=w'_{n+k'}$ for all $n\geq 0$.  Denote by $\mathcal{I}$ the sigma-algebra generated by all measurable unions of these equivalence 
classes (mod $0$) with respect to $\PP_{\theta}$.  Thus by Rokhlin's theory of Lebesgue spaces \cite[No.2, p.30]{Rokhlin52},  there exist a unique (up to isomorphism) Lebesgue space $\partial_\mu \Gamma$ equipped with a sigma-algebra $\mathcal{S}$ and a measurable function 
$$
\mbox{bnd}: \Omega \to \partial_\mu \Gamma
$$
such that the pre-image of $\mathcal{I}$ under the map is $\mathcal{S}$. Let $\nu$ be the image of $\PP$ under map $\mbox{bnd}$. The probability space $(\partial_\mu \Gamma, \nu)$ is called the \emph{Poisson boundary} of the $(\Gamma,\mu)$ random walk.

Because the semigroup generated by $\mu$ acts on sample paths by $g.(w_n)_{n\geq0}=(gw_n)_{n\geq0}$, this action extends to an action on the Poisson boundary.  Moreover, $\nu$ is $\mu$-stationary, that is
$$
\nu = \sum_{g\in \Gamma} \mu(g) g\nu.
$$
 A quotient of the Poisson boundary with respect to a $\Gamma$-equivariant  partition is called a $\mu$-boundary. Thus, the Poisson boundary is the maximal $\mu$-boundary.  
 
 Note that throughout this paper, a partition is a measurable partition in the sense of Rokhlin \cite[Section I.3]{Rohlin67}.

\subsection{Entropy}  We will use the language of partitions to formulate entropy. 

Given a partition $\gamma$ on the space of sample paths and $\omega \in \Omega$, let $\gamma[\omega]$ denote the class that includes $\omega$.
We denote the (Shannon) entropy of the partition $\gamma$ by 
$$
H_\PP(\gamma)=H(\gamma) =- \int_\Omega \log\PP(\gamma[ \omega]) \ d\PP(\omega).
$$

Given a random variable $Y : \Omega \to \Sigma$ with values in a countable set $\Sigma$, we define the preimage partition
$\gamma_Y := \bigsqcup_{y \in \Sigma}\{ \omega \in \Omega \ : \ Y(\omega) = y\}$
and 
$$H(Y) = H(\gamma_Y) = - \sum_{y \in \Sigma} \log\PP(Y(\omega) = y) \ \PP(Y(\omega) = y).$$

Suppose that $\gamma$ and $\beta$ are two countable partitions on  $(\Omega,\PP)$. The joint partition $\gamma \vee \beta $ of $\gamma$ and $\beta$ is defined by setting for every $\omega \in \Omega$
$$
 (\gamma \vee \beta)[\omega] = \gamma[\omega] \cap \beta[\omega].
$$
By the properties of entropy, one can show the following.
\begin{lemma}\label{lem:joint}
Let $\gamma$ and $\beta$ be two countable partitions. Then, 
\begin{enumerate}
\item $H(\gamma \vee \beta)  \leq H(\gamma) + H(\beta).$
\item If the cardinality of $\gamma$ is  $|\gamma|$,  then $H(\gamma) \leq \log |\gamma|$.
\end{enumerate}
\end{lemma}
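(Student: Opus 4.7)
The plan is to derive both inequalities by elementary manipulations of the definition of Shannon entropy, treating the atoms of the partitions as the outcomes of discrete random variables. Both statements are classical in information theory, so the proposal is essentially to verify that the standard arguments port over to the partition formalism used in the paper; no serious obstacle is expected.

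For part (1), I would let $\{A_i\}$ and $\{B_j\}$ denote the atoms of $\gamma$ and $\beta$, set $p_i := \PP(A_i)$, $q_j := \PP(B_j)$ and $r_{ij} := \PP(A_i \cap B_j)$, so that the atoms of $\gamma \vee \beta$ are the $A_i \cap B_j$ with probabilities $r_{ij}$, and the marginal identities $\sum_j r_{ij} = p_i$, $\sum_i r_{ij} = q_j$ hold. Expanding each entropy as a sum and using these marginals, one obtains
\begin{equation*}
H(\gamma) + H(\beta) - H(\gamma \vee \beta) = \sum_{i,j} r_{ij} \log \frac{r_{ij}}{p_i q_j},
\end{equation*}
which is the Kullback--Leibler divergence between the joint law $\{r_{ij}\}$ and the product law $\{p_i q_j\}$. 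Non-negativity of this quantity then follows from Jensen's inequality applied to the convex function $x \mapsto x \log x$ (equivalently, from Gibbs' inequality), giving $H(\gamma \vee \beta) \leq H(\gamma) + H(\beta)$.

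For part (2), I would simply write $H(\gamma) = \sum_{i=1}^{|\gamma|} p_i \log(1/p_i)$ and apply Jensen's inequality to the concave function $\log$, yielding
\begin{equation*}
H(\gamma) = \sum_{i} p_i \log \frac{1}{p_i} \leq \log \Big(\sum_{i} p_i \cdot \frac{1}{p_i}\Big) = \log |\gamma|.
\end{equation*}
The only points requiring mild care are the conventions $0 \log 0 := 0$ in both sums, and the fact that when $\gamma$ or $\beta$ is countably (rather than finitely) infinite one should note that the sums and the inequalities remain valid term-by-term; neither issue presents a substantive obstacle.
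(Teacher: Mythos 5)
Your proof is correct and is exactly the standard argument (Gibbs/Jensen for subadditivity, Jensen for the cardinality bound) that the paper implicitly invokes: the paper states this lemma without proof, simply citing ``the properties of entropy.'' Nothing further is needed.
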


Let $E$ be a measurable set in $\Omega$. For a countable partition $\gamma$, we define the partition $\gamma^E$ such that 
$$
\gamma^E[\omega] = \begin{cases}
\gamma[\omega] \cap E & \omega\in E\\
E^c=\Omega - E & \omega \not \in E
\end{cases}
$$
We need the following lemma, which is an application of uniform integrability of $L^1$ functions, see \cite[Lemma, 2.4]{Chawla-Forghani-Frisch-Tiozzo22p}.

\begin{lemma}\label{lem:uniform}
Let $\gamma$ be a countable measurable partition on $(\Omega, \PP)$ with finite entropy. Then for every $\epsilon>0$ there exists $\delta>0$ such that for every measurable set $E$ with $\PP(E) < \delta$,
$$
H(\gamma^E) <\epsilon.
$$
\end{lemma}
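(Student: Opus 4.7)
The plan is to decompose $H(\gamma^E)$ as a sum of three manifestly nonnegative pieces, each of which vanishes as $\PP(E)\to 0$; the only nontrivial piece is controlled by the absolute continuity of the integral of a single $L^1$ function, namely the pointwise information function associated to $\gamma$.

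Writing $\gamma = \{C_i\}_{i \geq 1}$, by definition
\[
H(\gamma^E) \;=\; -\PP(E^c)\log \PP(E^c) \;-\; \sum_i \PP(C_i\cap E) \log \PP(C_i\cap E).
\]
Using the identity $-\log \PP(C_i\cap E) = -\log \PP(C_i) + \log\!\bigl(\PP(C_i)/\PP(C_i\cap E)\bigr)$ and the inequality $\PP(C_i \cap E) \le \PP(C_i) \le 1$, I would split this as
\[
H(\gamma^E) \;=\; \underbrace{-\PP(E^c)\log \PP(E^c)}_{(\mathrm{I})} \;+\; \underbrace{\sum_i \PP(C_i\cap E)\,(-\log \PP(C_i))}_{(\mathrm{II})} \;+\; \underbrace{\sum_i \PP(C_i\cap E)\log \frac{\PP(C_i)}{\PP(C_i\cap E)}}_{(\mathrm{III})},
\]
where all three summands are nonnegative. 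Term (I) is the binary entropy of $\PP(E)$ and plainly tends to $0$ with $\PP(E)$.

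For term (II), observe that the function $\varphi(\omega) := -\log \PP(\gamma[\omega])$ satisfies $\int_\Omega \varphi\,d\PP = H(\gamma) < \infty$, so $\varphi \in L^1(\Omega,\PP)$. Term (II) is exactly $\int_E \varphi\, d\PP$, so by the standard absolute continuity of the Lebesgue integral (equivalently, uniform integrability applied to the singleton family $\{\varphi\}$), for any $\epsilon > 0$ there exists $\delta > 0$ so that $\PP(E) < \delta$ forces $\int_E \varphi \, d\PP < \epsilon/3$. This is the one place where the finite-entropy hypothesis enters.

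For term (III), setting $q_i := \PP(C_i \cap E)/\PP(C_i) \in [0,1]$ and $h(t) := -t\log t$ (a concave function on $[0,1]$ with $h(0)=h(1)=0$), term (III) equals $\sum_i \PP(C_i)\, h(q_i)$. Since $\{\PP(C_i)\}$ is a probability vector, Jensen's inequality gives
\[
\sum_i \PP(C_i)\, h(q_i) \;\le\; h\!\left(\sum_i \PP(C_i)\, q_i\right) \;=\; h(\PP(E)) \;=\; -\PP(E)\log \PP(E),
\]
which again tends to $0$ with $\PP(E)$. Choosing $\delta$ small enough so that each of (I), (II), (III) is $<\epsilon/3$ yields the claim. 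I do not anticipate any genuine obstacle; the only nonroutine step is recognizing that the ``cross term'' (III), which a priori has no reason to be small, is tamed by a single application of Jensen's inequality to reduce it to $h(\PP(E))$.
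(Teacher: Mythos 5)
Your proof is correct, and it follows exactly the route the paper indicates: the paper does not prove this lemma but cites \cite[Lemma 2.4]{Chawla-Forghani-Frisch-Tiozzo22p}, describing it as ``an application of uniform integrability of $L^1$ functions,'' which is precisely your treatment of term (II) via the information function $\varphi(\omega)=-\log\PP(\gamma[\omega])\in L^1$. The remaining bookkeeping (the binary-entropy term and the Jensen bound $\sum_i\PP(C_i)h(q_i)\le h(\PP(E))$) is sound, so no further comment is needed.
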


\subsection{Conditional Entropy} 
Let $(X,\lambda)$ be a $\mu$-boundary. Then, for $\lambda$-almost every point $\xi \in X$ a system of conditional measures $\{\PP^\xi\}_{\xi\in X}$ exists such that 
$$
\PP = \int_X \PP^\xi \  d\lambda(\xi).
$$
We denote the conditional entropy given $\xi \in X$ by
$$ 
H_\xi(\gamma)=H_{\PP^\xi}(\gamma)= -\int_\Omega \log\PP^\xi(\gamma[\omega])\  d\PP^\xi(\omega) 
$$  
and the conditional entropy of the $\mu$-boundary $(X,\lambda)$ by 
$$
H_X(\gamma) = \int_X H_\xi(\gamma) \ d\lambda(\xi).
$$
Let $\eta_X$ be the associated partition to the $\mu$-boundary $(X,\lambda)$, thus two sample paths are equivalent when they  have the same boundary point in $X$. Alternative notations include $H(\gamma|\xi)=H_\xi(\gamma)$ and $H_X(\gamma) = H(\gamma | \eta_X)$.

Denote by $\alpha_n$ the partition on the space of sample paths such that two sample paths are $\alpha_n$-equivalent when they have the same $n^\text{th}$-step. In this case,  
$$
H(\alpha_n) = -\sum_g \mu^{*n}(g) \log\mu^{*n}(g),
$$
where $\mu^{*n}$ is the $n^\text{th}$-fold convolution of $\mu$.

We say $\mu$ has finite entropy when $H(\alpha_1)$ is finite. 
One can show that the sequence $\{H(\alpha_n)\}_{n\geq1}$ is subadditive, and the \emph{asymptotic entropy} (also known as the \emph{Avez entropy}) of the $\mu$-random walk   is defined as  
$$
h(\mu) = \lim_{n\to\infty} \frac{H(\alpha_n)}{n}.
$$
Note that when  $(X,\lambda)$ is a $\mu$-boundary,  the Furstenberg entropy is defined as
$$
h_\mu(X,\lambda) = \sum_g \mu(g) \int_X \log\frac{dg\lambda}{d\lambda}(\xi) \ dg\lambda(\xi).
$$
Kaimanovich-Vershik \cite[Theorem 3.2]{Kaimanovich-Vershik} and Derriennic \cite[Th\'eor\`eme, p. 268]{De} proved that 
$$h_{\mu}(X,\lambda) \leq h(\mu).$$ 
Moreover, when $\mu$ has finite entropy, the equality holds if and only if $(X,\lambda)$ is the Poisson boundary.  
We use the following entropy criterion to determine whether a $\mu$-boundary is the Poisson boundary.

\begin{theorem}\cite[Theorem 2]{Kaimanovich-maximal85}\label{thm:maximal}
Let $(X,\lambda)$ be a $\mu$-boundary. If $\mu$ has finite entropy, then 
$$
h_X=\lim_{n\to\infty} \frac{H_X(\alpha_n)}{n} $$
exists.  Moreover, $(X,\lambda)$ is the Poisson boundary if and only if  $h_X=0$.
\end{theorem}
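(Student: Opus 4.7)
The plan is to prove the theorem in two movements: first establish existence of $h_X$ by subadditivity, then deduce the Poisson-boundary characterization from an exact entropy identity combined with the Kaimanovich--Vershik/Derriennic inequality recalled just above. For existence, I would set $a_n := H_X(\alpha_n)$ and prove $a_{n+m} \leq a_n + a_m$; Fekete's lemma then yields $h_X = \lim_n a_n/n$. The key is the factorization $w_{n+m} = w_n \cdot (w_n^{-1}w_{n+m})$: conditionally on $w_n = g$, the shifted increment $w_n^{-1}w_{n+m}$ is independent of $w_n$ and has the law of $w_m$, while the boundary seen by the shifted future walk is $g^{-1}\xi$. Integrating the pointwise quantities $H_{g^{-1}\xi}(\alpha_m)$ against the joint law of $(w_n, \xi)$ under $\PP$ and using the $\mu^{*n}$-stationarity of $\lambda$ recovers $H_X(\alpha_m)$, closing the subadditivity.

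For the characterization, I would derive the exact identity
\[h(\mu) = h_X + h_\mu(X,\lambda).\]
Splitting $H(\alpha_n) = H_X(\alpha_n) + \big(H(\alpha_n) - H_X(\alpha_n)\big)$ and dividing by $n$ reduces the task to showing that the mutual information $H(\alpha_n) - H_X(\alpha_n)$ equals $n\, h_\mu(X,\lambda)$ exactly. I would first establish the Doob-type formula $\PP^\xi(w_n = g) = \mu^{*n}(g) \cdot \phi_g(\xi)$, with $\phi_g(\xi) := dg\lambda/d\lambda(\xi)$, which follows from Bayes's rule together with the observation that conditionally on $w_n = g$ the future boundary has law $g\lambda$. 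This gives $H(\alpha_n) - H_X(\alpha_n) = \int_\Omega \log \phi_{w_n}(\xi)\, d\PP$; expanding via the cocycle identity $\phi_{gh}(\xi) = \phi_g(\xi)\phi_h(g^{-1}\xi)$ along the walk yields $\log \phi_{w_n}(\xi) = \sum_{k=1}^n \log \phi_{g_k}(\xi_{k-1})$, where $\xi_k := w_k^{-1}\xi$ satisfies $\xi_{k-1} = g_k\xi_k$. Since $\xi_k$ depends only on the future increments $g_{k+1}, g_{k+2}, \dots$, the pair $(g_k, \xi_k)$ is $\PP$-independent with marginals $\mu$ and $\lambda$, and each summand integrates to the one-step Furstenberg entropy $h_\mu(X, \lambda)$. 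Combined with the cited inequality $h_\mu(X,\lambda) \leq h(\mu)$, with equality iff $(X,\lambda)$ is the Poisson boundary, this gives $h_X = 0$ iff $(X,\lambda)$ is the Poisson boundary.

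The step I expect to be most delicate is the bookkeeping for the Doob formula and the independence claim for $(g_k, \xi_k)$. Both hinge on $\Gamma$-equivariance of the boundary map $\mathrm{bnd}$, on the shift-invariance of $(\Omega, \PP)$, and on Rokhlin's disintegration, and the identities hold only up to $\PP$-null sets, so some care is needed. Finite entropy of $\mu$ is used throughout to guarantee finiteness of $H(\alpha_n)$, integrability of $\log \phi_g$, and convergence of the Fekete limit.
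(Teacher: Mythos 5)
The paper does not prove this statement: it is quoted directly from Kaimanovich's entropy criterion \cite{Kaimanovich-maximal85}, so there is no in-paper argument to compare against, and your proposal must be judged on its own. On its own it is essentially the classical proof and is sound. The subadditivity $H_X(\alpha_{n+m})\leq H_X(\alpha_n)+H_X(\alpha_m)$ does follow as you say from the Markov property of the conditioned walk (under $\PP^\xi$, given $w_n=g$, the law of $(w_n^{-1}w_{n+k})_{k\geq 0}$ is $\PP^{g^{-1}\xi}$), the Doob formula $\PP^\xi(w_n=g)=\mu^{*n}(g)\,\frac{dg\lambda}{d\lambda}(\xi)$, and the $\mu^{*n}$-stationarity of $\lambda$; and your computation $H(\alpha_n)-H_X(\alpha_n)=n\,h_\mu(X,\lambda)$ via the cocycle identity and the independence of $(g_k,\xi_k)$ with $\xi_k=w_k^{-1}\xi$ is the standard exact identity, yielding $h_X=h(\mu)-h_\mu(X,\lambda)$.

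One caveat on the ``moreover'' clause. You close the argument by invoking the equivalence, recalled in the paper immediately before the theorem, that $h_\mu(X,\lambda)=h(\mu)$ if and only if $(X,\lambda)$ is the Poisson boundary. The inequality $h_\mu(X,\lambda)\leq h(\mu)$ and the ``Poisson implies equality'' direction are the Kaimanovich--Vershik/Derriennic results, but the converse (equality forces maximality) is itself the substance of the theorem you are proving, merely phrased in terms of Furstenberg entropy rather than conditional entropy; a genuinely self-contained proof of that implication needs the extra step that $h_X=0$ forces the conditioned walks $\PP^\xi$ to have trivial tail (e.g.\ via the conditional zero--two law or the entropy characterization of tail triviality applied to the Doob-transformed chain). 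So your write-up is complete as a proof relative to the facts the paper explicitly recalls, but the hard direction of the criterion is outsourced to them rather than proved.
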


\section{Random walks on semisimple Lie groups}

\begin{definition}
A measure $\nu$ on $B = G/P$ is \emph{irreducible} if $g \nu(NmP) = 0$ for any $g \in G$, $m \in W \setminus \{ \overline{e} \}$.
\end{definition}

Let $G_\mu$ be the closed subgroup generated by the support of $\mu$, and let $T_\mu$ be the closed subsemigroup generated by the support of $\mu$.

\begin{theorem}\cite[Thm 2.6]{Guivarch-Raugi85} \label{T:converge}
Let $\mu$ be a probability measure on a semisimple, connected, Lie group $G$ with finite center. 
Suppose that $T_\mu$ contains a contracting sequence and that $G_\mu$ is totally irreducible. 

Then there exists a unique $\mu$-stationary probability measure $\nu$ on $B = G/P$ and this measure is irreducible. 

Moreover, there exists a $B$-valued random variable $Z$ such that the sequence of measures $(g_1 g_2 \dots g_n \nu)$ converges almost surely to the Dirac measure $\delta_{Z(\omega)}$. 
\end{theorem}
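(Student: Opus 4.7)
The plan is to combine a Markov--Kakutani fixed-point argument for existence with a martingale convergence argument for the random measures $w_n\nu$, and then use the contracting and total irreducibility hypotheses to force the almost sure limit of these measures to be a Dirac mass.

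For existence, since $B = G/P$ is compact, the space $\mathcal{P}(B)$ of probability measures is weak-$*$ compact and $\lambda \mapsto \mu * \lambda$ is a continuous affine self-map, so either Markov--Kakutani or a weak-$*$ limit point of the Ces\`aro averages $\tfrac{1}{N}\sum_{n=0}^{N-1} \mu^{*n} * \lambda_0$ produces a $\mu$-stationary measure $\nu$. For convergence of the random sequence $\lambda_n(\omega) := w_n\nu$, the stationarity of $\nu$ gives, for each continuous $f : B \to \RR$, that $\int f\,d\lambda_n$ is a bounded martingale in the natural filtration generated by the increments, hence converges almost surely. Running $f$ through a countable dense family in $C(B)$ and invoking compactness of $\mathcal{P}(B)$ upgrades this to almost sure weak-$*$ convergence $\lambda_n \to \nu_\omega$ for some random $\nu_\omega \in \mathcal{P}(B)$.

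The crux is to show that $\nu_\omega$ is almost surely a Dirac. Because $T_\mu$ contains a contracting sequence and the shift on increments is ergodic, I would argue that almost every trajectory contains a subsequence $(w_{n_k})$ whose radial parts send every positive root to $+\infty$; writing the polar decomposition $w_{n_k} = k_{n_k}\exp(r(w_{n_k}))k'_{n_k}$ and passing to a further subsequence with $k_{n_k} \to k_\infty$ and $k'_{n_k} \to k'_\infty$ in $K$, the action of $w_{n_k}$ on $B$ collapses every measure not charging a certain finite union of translated degenerate Bruhat cells (the ``repelling set'' attached to $k'_\infty$) onto the single point $\delta_{k_\infty P}$. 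Total irreducibility of $G_\mu$ prevents the stationary $\nu$ from being concentrated on such cells, so $w_{n_k}\nu \to \delta_{k_\infty P}$; combined with the full almost sure weak-$*$ convergence above, this gives $\nu_\omega = \delta_{Z(\omega)}$ for a $B$-valued random variable $Z$.

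Uniqueness of $\nu$ then follows by applying the same Dirac convergence to any second stationary measure $\nu'$ and averaging over $\omega$: both $\nu$ and $\nu'$ equal the law of the exit point $Z$, which is determined by the trajectory law alone. Irreducibility, i.e.\ $g\nu(NmP) = 0$ for $m \neq \overline{e}$, likewise reduces to total irreducibility, since the translates of degenerate Bruhat cells form a finite union of lower-dimensional subsets that cannot carry mass of a $G_\mu$-totally irreducible stationary measure. The main obstacle I expect is the Dirac step: one must correctly identify the random repelling set depending on $k'_\infty$ and use total irreducibility to guarantee $\nu$-negligibility of this random set, so that the subsequential Dirac limit is unambiguous and matches the full sequence limit produced by the martingale argument.
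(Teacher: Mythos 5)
First, note that the paper does not prove this statement: Theorem \ref{T:converge} is quoted verbatim from Guivarc'h--Raugi \cite[Thm 2.6]{Guivarch-Raugi85}, so there is no internal proof to compare against; what follows compares your sketch to the classical argument.

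Your overall architecture (fixed point for existence, bounded martingales $\int f\,d(w_n\nu)$ for a.s.\ weak-$*$ convergence to a random limit $\nu_\omega$, contraction plus total irreducibility to force $\nu_\omega$ to be a Dirac, uniqueness and irreducibility as corollaries) is the correct shape, and your treatment of existence, martingale convergence, and the ``minimal degenerate cells of maximal mass'' mechanism behind irreducibility of $\nu$ are all sound as sketches. The genuine gap is the step you treat as routine rather than the one you flag: the claim that, because $T_\mu$ contains a contracting sequence and the shift on increments is ergodic, almost every trajectory contains a contracting subsequence $(w_{n_k})$. The natural attempt --- plant increment blocks approximating a very contracting $t\in T_\mu$, which occur infinitely often by Borel--Cantelli --- fails because radial parts are only subadditive: by the Lipschitz property of Lemma \ref{D-properties} one gets $\alpha(r(w_n s)) \geq \alpha(r(s)) - \Vert\alpha\Vert\,\Vert r(w_n)\Vert$, and the random time $n$ at which the good block appears may be so late that $\Vert r(w_n)\Vert$ swamps $\alpha(r(s))$; without any moment assumption there is no a priori control on $\Vert r(w_n)\Vert$. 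The zero--one law does reduce the claim to showing the event has positive probability, but establishing that positive probability is precisely the hard content of the theorem, not an input to it.

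In the actual Guivarc'h--Raugi (and Furstenberg, and Benoist--Quint) proof the logical order is reversed: one first proves, by a second martingale/Fubini argument over $\Omega\times G$ with $g\sim\mu^{*m}$, that for a.e.\ $\omega$ the same limit $\nu_\omega=\lim_n w_n g\nu$ is obtained for all $g$ in a dense subset of $T_\mu$; applying this to (approximations of) the contracting sequence $t_k$, whose images $t_k\nu$ concentrate near a point because $\nu$ charges no translate of a degenerate Bruhat cell, forces $\nu_\omega$ to be a Dirac mass. The almost sure contraction of $(w_n)$ itself is then deduced as a \emph{consequence} of the Dirac convergence together with non-degeneracy of $\nu$. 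To repair your argument you would need either to supply this limit-measure lemma or an independent proof that $\limsup_n\min_{\alpha>0}\alpha(r(w_n))=\infty$ almost surely; as written, the Dirac step is circular.
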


Let us also show that limit points of the random walk are almost surely pairs of transverse flags. 

\begin{corollary} 
For any $b_- \in B$, we have $\nu(\{ b_+ \in B \ : \ (b_-, b_+) \in \mathcal{O}_{w_0} \} ) = 1$.
\end{corollary}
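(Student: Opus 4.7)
The plan is to reduce the claim to a direct application of the irreducibility of $\nu$ given by Theorem \ref{T:converge}, by identifying the transverse set with a translate of the big Bruhat cell and its complement with a finite union of translates of degenerate cells.

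First, I would fix $b_- \in B$ and write $b_- = gP$ for some $g \in G$. Since $\mathcal{O}_{w_0} = G \cdot (P, w_0 P)$, a flag $b_+$ is in the transverse set iff there is $h \in G$ with $hP = gP$ and $hw_0P = b_+$; writing $h = gp$ with $p \in P$, this is equivalent to $b_+ \in gPw_0P/P$.

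Second, I would simplify the double coset $Pw_0P$. Using $P = \tilde{N}AM$, the fact that $AM$ is normalized by any lift of $w_0 \in W = M'/M$, and the fact that conjugation by such a lift sends $\tilde{N}$ to $N$ (since $w_0$ flips the sign of every root), one obtains
\[
Pw_0 P \;=\; \tilde{N}AMw_0 P \;=\; \tilde{N}w_0 P \;=\; w_0 N P,
\]
so the transverse set coincides with $gw_0 NP/P \subset B$.

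Third, I would invoke the Bruhat decomposition $G = \bigsqcup_{m \in W} NmP$. Left-multiplying by $gw_0$ gives a partition $B = \bigsqcup_{m \in W} gw_0\, NmP/P$, and summing $\nu$ over it yields
\[
1 \;=\; \nu(gw_0 NP/P) \;+\; \sum_{m \in W \setminus \{e\}} \nu(gw_0\, NmP/P).
\]
By the irreducibility of $\nu$ (Theorem \ref{T:converge}), applied to the element $gw_0 \in G$, every term with $m \neq e$ vanishes, so $\nu(gw_0 NP/P) = 1$, which is the desired claim.

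The only delicate step is the algebraic identity $Pw_0 P = w_0 N P$, which requires unpacking the definitions of $N, \tilde{N}, P$ and the action of $w_0$ on roots; once that is in hand, the rest of the argument is bookkeeping that combines the Bruhat stratification with the irreducibility property already supplied by Theorem \ref{T:converge}.
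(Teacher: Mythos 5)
Your proposal is correct and follows essentially the same route as the paper: both identify the set of flags transverse to $b_-$ with a translate of the big Bruhat cell via the double coset computation $P w P = w_0 N w_0 w P$ (your $P w_0 P = w_0 N P$ is the case $w = w_0$), and both conclude by applying the irreducibility of $\nu$ from Theorem \ref{T:converge} to kill the degenerate cells. The only cosmetic difference is that the paper shows $\nu(\{b_+ : (b_-,b_+) \in \mathcal{O}_w\}) = 0$ for each $w \neq w_0$ and uses disjointness of the orbits, whereas you sum over the Bruhat stratification of $B$ directly; these are the same bookkeeping.
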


\begin{proof}
Let $w \in W$. Let $g_1 \in G$ such that $b_- = g_1P$, and let $g \in G$ such that $b_+ = gP$.
Then $(b_-, b_+) = (g_1 P, g P)$ belongs to $\mathcal{O}_w$ if and only if there exists $h \in G$ such that $g_1 P = hP$, $gP = hwP$. 
Hence $h \in g_1 P$, so $g P \in g_1 P wP$. Now, recall that $P = \tilde{N} A M$, $\tilde{N} = w_0 N w_0$, so 
$$g_1 P w P = g_1 \tilde{N} A M w P = g_1 w_0 N w_0 AM w P = g_1 w_0 N w_0 w AM P =  g_1 w_0 N w_0 w P$$
By the definition of irreducibility, 
$$\nu(\{ b_+ \in B \ : \ (b_-, b_+) \in \mathcal{O}_{w} \} ) = \nu(g_1 P w P) = \nu(g_1 w_0 N w_0 w P) = 0$$
unless $w_0 w = \overline{e} \in W$, hence $w = w_0$. Since the sets $\mathcal{O}_w$ for distinct $w \in W$ are disjoint, 
the claim follows.
\end{proof}

\begin{definition}
Let $G$ be a connected, semisimple Lie group with finite center, and let $\mu$ be a probability measure on $G$ with countable support. 
We say that $\mu$ is \emph{totally irreducible and bi-contracting} if $G_\mu$ is totally irreducible and both $T_\mu$ and $T_\mu^{-1}$ contain a contracting sequence.
\end{definition} 

In particular,  if the semigroup generated by $\mu$ is Zariski dense, then the probability measure $\mu$ is totally irreducible and bi-contracting by \cite{Guivarch-Raugi89}. 

Let now $\mu$ be a totally irreducible, bi-contracting measure $\mu$ on $G$. 
By the above theorem, there exists a unique $\mu$-stationary probability measure $\nu$ on $B$, 
and a $B$-valued random variable $Z$ such that 
$$g_1 \dots g_n \nu \to \delta_{Z(\omega)}$$
for almost every $\omega \in \Omega$.

Since from now on we will also deal with bilateral random walks, let us consider the space of \emph{bilateral increments} $(G^\mathbb{Z}, \mu^{\otimes \mathbb{Z}})$, whose elements we denote as $(g_n)_{n \in \mathbb{Z}}$.
Let now $\overline{\Omega}$ denote the space of \emph{bilateral sample paths}: its elements are also bi-infinite sequences of elements of $G$, and are denoted as $\omega = (w_n)_{n \in \mathbb{Z}}$, where 
$$w_n := \left\{ \begin{array}{ll} 
g_1 g_2 \dots g_n & \textup{if }n > 0\\
e & \textup{if }n = 0\\
g_{0}^{-1} g_{-1}^{-1} \dots g_{n+1}^{-1} & \textup{if }n < 0.
\end{array} \right.$$
We denote as $\overline{\mathbb{P}}$ the induced probability measure on $\overline{\Omega}$, so that $(\overline{\Omega}, \overline{\mathbb{P}})$ 
is the probability space of sample paths for the bilateral random walk. Note that the sequence $(w_{-n})_{n \geq 0}$ follows a random walk on $G$ 
driven by the \emph{reflected measure} $\check{\mu}$, where $\check{\mu}(g) := \mu(g^{-1})$, and independent of $(w_n)_{n \geq 0}$. 
We often call $(w_n)_{n \geq 0}$ the \emph{forward random walk} and $(w_{-n})_{n \geq 0}$ the \emph{backward random walk}. 

Thus, applying Theorem \ref{T:converge} to the backward random walk, 
there exists a unique $\check{\mu}$-stationary probability measure $\check{\nu}$ on $B$, 
and a $B$-valued random variable $\check{Z}$ such that 
$$g_0^{-1} \dots g_{-n}^{-1} \check{\nu} \to \delta_{\check{Z}(\omega)}$$
for almost every $\omega \in \overline{\Omega}$.
Hence, this defines a measurable map 
\begin{equation} \label{E:double-boundary}
(\overline{\Omega}, \overline{\mathbb{P}}) \to (B \times B, \nu \otimes \check{\nu}).
\end{equation}
Finally, the bilateral hitting measure $\nu \otimes \check{\nu}$ is supported on $\mathcal{O}_{w_0} \subseteq B \times B$. 
Moreover, for any pair $(b_-, b_+) \in \mathcal{O}_{w_0}$, there exists a unique oriented flat 
$\Phi(b_-, b_+)$ with endpoints $(b_-, b_+)$. 

Thus, let us define the map $F : \overline{\Omega} \to \mathcal{F}$ as 
$$F(\omega) := \Phi(Z(\omega), \check{Z}(\omega)).$$

\section{The pin-down argument}

\subsection{Critical times}

Fix a constant $\alpha > 0$. For any $k \geq 0$, denote by $I_{k, \alpha}$ the time interval $[k \alpha, (k+1) \alpha) \cap \mathbb{N}$.
In order to bound the conditional entropy of the random walk, we will subdivide the interval $[0,n]$ into $n/\alpha $ subintervals $I_{k, \alpha}$, 
each of length $\alpha$.

\begin{definition}[Critical times]
Let $\omega = (w_i)_{i \in \mathbb{Z}}$ be a bilateral sample path and   $M>0$ and $\alpha > 0$ be fixed constants. 
We call time $i$ \emph{critical} (depending on $M,n,\alpha$ and $\omega$) if $i$ is the first time in its subinterval such that $\dist(w_i. o, F(\omega)) \leq M$, 
meaning that the sample path is close to flat with respect to the Riemannian metric. 
\end{definition}
We will show that critical times occur quite often for a universal $M>0$. 

\begin{lemma}[Plenty of critical times]\label{lem:critical}
Suppose that $\mu$ is a totally irreducible and bi-contracting probability measure on $G$. 
There exists $M > 0$ such that for any $\epsilon > 0$ there exists $k$ such that 
$$\PP\bp{\dist(w_i. o, F(\omega)) \geq M \textup{ for all } i \in [n, n +k] } < \epsilon$$
for any $n$.
\end{lemma}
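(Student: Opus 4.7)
The strategy is to convert the statement into a recurrence question for the ergodic bilateral shift $\bar U$ on $(\overline{\Omega}, \overline{\PP})$, and then apply the pointwise ergodic theorem. No quantitative estimate on hitting frequencies is needed, only positivity.

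The first, and main, step is to use the $G$-equivariance of the flat map $F$ to pull the event back to a single fixed event. Under the shift $\bar U$ of increments we have $Z(\bar U \omega) = g_1^{-1} Z(\omega)$ and $\check{Z}(\bar U \omega) = g_1^{-1} \check{Z}(\omega)$, and since the map $\Phi$ of \eqref{E:flags-to-flats} is $G$-equivariant, it follows that
$$F(\bar U^i \omega) = w_i(\omega)^{-1} F(\omega), \qquad i \geq 0.$$
Combining this with the $G$-invariance from Lemma \ref{D-properties}(1) and the expression for the Riemannian distance in \eqref{E:distance}, we obtain
$$\dist\bp{w_i(\omega) \cdot o,\, F(\omega)} \; = \; \dist\bp{o,\, F(\bar U^i \omega)}.$$
Define $A_M := \bs{\omega \in \overline{\Omega}\, : \, \dist(o, F(\omega)) \leq M}$. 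By the above identity, the event that time $i$ lies within $M$ of the flat is precisely $\bar U^{-i}(A_M)$.

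The second step is to show $\overline{\PP}(A_M) > 0$ for some $M$. By Theorem \ref{T:converge} applied to both $\mu$ and $\check{\mu}$, together with the transversality observation, the pair $(Z(\omega), \check{Z}(\omega))$ lies in $\mathcal{O}_{w_0}$ almost surely, so $F(\omega)$ is almost surely a well-defined flat in $S$. Since any flat is a nonempty closed subset of $S$, the function $\omega \mapsto \dist(o, F(\omega))$ is a.s.\ finite, and therefore $\overline{\PP}(A_M) \to 1$ as $M \to \infty$. Fix any $M$ large enough that $\overline{\PP}(A_M) > 0$.

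The third step is the recurrence argument. Since $(\overline{\Omega}, \overline{\PP}, \bar U)$ is measure-preserving and ergodic (being the Bernoulli shift on the increments), the pointwise ergodic theorem applied to $\mathbf{1}_{A_M}$ shows that for $\overline{\PP}$-a.e.\ $\omega$,
$$\lim_{N \to \infty} \frac{1}{N}\#\bs{0 \leq i < N \,:\, \bar U^i \omega \in A_M}= \overline{\PP}(A_M) > 0.$$
In particular, $\bs{i \geq 0 \,:\, \bar U^i \omega \in A_M}$ is a.s.\ infinite. Setting $B_k := \bs{\omega \, :\, \bar U^i \omega \notin A_M \text{ for all } 0 \leq i \leq k}$, the sets $B_k$ are decreasing with $\overline{\PP}(\bigcap_k B_k) = 0$, hence $\overline{\PP}(B_k) \downarrow 0$. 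By the stationarity of $\bar U$, the probability that no $i \in [n, n+k]$ satisfies $\dist(w_i \cdot o, F(\omega)) \leq M$ equals $\overline{\PP}(B_k)$ for every $n$. Given $\epsilon > 0$, choose $k$ with $\overline{\PP}(B_k) < \epsilon$ to finish.

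The only conceptually substantive point is the equivariance computation in the first step; once the event is recognized as the $\bar U$-pullback of a fixed event of positive measure, the rest is a soft Poincaré-recurrence type argument, which is precisely why this approach dispenses with the exponential estimates provided by pivoting theory.
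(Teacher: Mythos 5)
Your proof is correct and follows essentially the same route as the paper's: both identify the event $\{\dist(w_i.o, F(\omega)) \geq M\}$ with the $i$-th shift-pullback of a fixed event via equivariance of $F$, choose $M$ so that the good event has positive measure, and conclude by a soft recurrence argument using ergodicity and stationarity of the increment shift. The only cosmetic difference is that you invoke the pointwise ergodic theorem where the paper argues directly that $\overline{\PP}\bp{\bigcap_{i\geq 0} U^{-i}A} = 0$ for the complementary event.
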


\begin{proof}
Define the set
$$
A := \bs {\omega \in \overline{\Omega} \ : \ \dist(o, F(\omega)) \geq M },
$$
where $M$ is chosen so that $0 < \PP(A) < 1$. 
Let $U$ be the shift in the space of increments.  Given that $U$ is measure-preserving and ergodic and $0<\PP(A)<1$, 
we obtain
$$\PP\bp{\bigcap_{i = 0}^\infty U^{-i}A}= 0.$$
Since $U$ is measure-preserving, for any $\epsilon > 0$ there exists $k$ such that 
$$\PP\bp{\bigcap_{i = n}^{n+k} U^{-i}A} = \PP\bp{\bigcap_{i = 0}^k U^{-i}A} < \epsilon.$$
Now note that, by $G$-invariance of the distance, 
\begin{align*}
U^{-i}A & = \bs{ \omega \in \overline{\Omega} \ : \ \dist(o, F(U^i \omega) ) \geq M } \\
& = \bs{ \omega \in \overline{\Omega} \ : \ \dist(o, w_i^{-1} F(\omega) ) \geq M } \\
& = \bs{ \omega \in \overline{\Omega} \ : \ \dist(w_i. o,  F(\omega)) \geq M },
\end{align*}
hence the claim follows.
\end{proof}

\begin{definition} \label{def:good-times}
Let $n>0$ be an integer, $\alpha>0,L>0$. We fix an $M>0$  as in Lemma~\ref{lem:critical}. We say that an interval $I_{k,\alpha}$ is \emph{$L$-good} 
for $1\leq k < \frac{n}{\alpha}$ when 
\begin{enumerate}
\item   there exists a critical time in $I_{k,\alpha}$,
\item all step (increment) sizes within $I_{k,\alpha}$  are at most  $L$: 
$$
\dist(w_i.o, w_{i+1}.o) \leq L \hspace{1cm} \forall i\in I_{k,\alpha}.
$$
\end{enumerate}
Otherwise, we say the interval $I_{k,\alpha}$ is \emph{$L$-bad}.

Moreover, by definition we declare both the first interval $I_{0, \alpha}$ and the last interval $I_{\lfloor n/\alpha \rfloor, \alpha}$ 
to be $L$-bad. 
\end{definition}

\subsection{Defining the partitions}
Let us fix a pair $(b_-, b_+)$ of transverse flags in $G/P \times G/P$, which we think of as the two boundary points of, respectively, the backward and forward random walk. 
As we saw earlier in Eq. \eqref{E:flags-to-flats}, this choice determines an oriented flat $F$ in the symmetric space. 

Moreover, let $p \in S$ be the closest point projection of the basepoint $o$ onto $F$. 
Then, there exists $g \in G$ such that $F = g A.o$ and also $p = g.o$.
The choice of $g$ is unique up to multiplication by $M'$ if we consider $F$ as unoriented, and up to $M$ if we take into account the orientation on $F$. 

Let $\log : A \to \mathfrak{a}$ be the inverse of the exponential map, and 
let $\textup{proj}_F : G/K \to F$ be the closest point projection onto $F$. Now, let $\pi_{F} : G/K \to \mathfrak{a}$ be the projection defined as follows: for $x \in G/K$, let $y = \textup{proj}_F(x) \in F$.
Then let $a \in A$ be such that $y = g a.o$, and define $\pi_{F}(x) := \log a$. 

\medskip

Let $0 < k_1 < k_2 < \dots < k_r \leq n$ be the critical times, in order. 
\begin{definition}
We call an index $j$ \emph{doubly good} if $k_j$ and $k_{j+1}$ lie in consecutive good intervals: that is, if there exists $k \leq n/\alpha$ such that $k_j \in I_{k, \alpha}$, $k_{j+1} \in I_{k+1, \alpha}$, and both $I_{k, \alpha}$ and $I_{k+1, \alpha}$ are $L$-good intervals. 
\end{definition}

We denote as $\mathcal{DG} \subseteq \{0, \dots, r\}$ the set of doubly critical indices. 
Now, let us fix once and for all a linear isomorphism $\iota: \mathfrak{a} \to \mathbb{R}^{d-1}$, and given a vector $v \in \mathfrak{a}$, 
we denote as $\lfloor v \rfloor \in \mathbb{Z}^{d-1}$ a choice of closest point to $\iota(v)$ in $\mathbb{Z}^{d-1}$, 
according to the metric induced by $\Vert \cdot \Vert_2$ on $\mathbb{R}^{d-1}$. 

\begin{definition}
Let $n,\alpha$ and $L$ be as before. For a sample path $\omega =(w_i)_{i \in \mathbb{Z}}$, the \emph{good projection}  is defined as 
$$
p_n^{\alpha,L}(\omega):=\left\lfloor \sum_{j \in \mathcal{DG}} \pi_{F}(w_{k_j}.o) - \pi_{F}(w_{k_{j+1}}.o) \right\rfloor.
$$
The sum is over all doubly good indices $j$ with $0 \leq j < \frac{n}{\alpha}$. 
\end{definition}

Let us recall that $W$ denotes the Weyl group, and let us now define a map $\sigma : \mathfrak{a} \to W$ as follows: for any $v \in \mathfrak{a}$, we let $\sigma(v)$ be an element of the Weyl group such that $\sigma(v).v \in \overline{\mathfrak{a}^+}$.
When $G = \textup{SL}(d, \mathbb{R})$, then $W = S_d$ is the group of permutations on the set $\{1, \dots, d \}$, 
and $\sigma(v)$ essentially records the order of the entries of $v \in \mathfrak{a} \subseteq \mathbb{R}^d$. 

Now, we record the information of the random walk at time $n$ via the following procedure, that gives rise to $4$ sets of partitions 
of the path space..

\begin{enumerate}
\item
We define as $\tau_{n}^{\alpha, L}$ the partition associated to recording the sequence $(k_1, \dots, k_r)$ of critical times. 

\item 
We record the value of the good projection $p_{n}^{\alpha, L}$ and denote as $\pi_n^{\alpha, L}$ the associated partition. 

\item
If an interval $I_{k, \alpha}$ is bad, we record all increments in the current interval, as well as the previous and the next one. 
More precisely, if we let $\mathcal{B} : = \{ k \in [0, n/\alpha) \ : \ I_{k, \alpha} \textup{ is bad} \}$
and $J_{k, \alpha}:= I_{k-1, \alpha} \cup I_{k, \alpha} \cup I_{k+1, \alpha}$, we record
$$\big((g_i)_{i \in J_{k, \alpha}}\big)_{k \in \mathcal{B}}$$
and we call the partition associated to this random variable $\beta_n^{\alpha, L}$. 

\item 
For each index $j \in [0, n/\alpha)$ that is not doubly good, we record
$$ \sigma\left( \pi_F(w_{k_{j+1}}.o) - \pi_F(w_{k_j}.o) \right)$$
that is, essentially, the order of the entries of the difference $\pi_F(w_{k_{j+1}}.o) - \pi_F(w_{k_j}.o)$.
We denote the associated partition by $\sigma_n^{\alpha, L}$.

\end{enumerate}

\subsection{Entropy estimates}

\begin{proposition} \label{E:entropy-upper-bound}
For any $\epsilon > 0$ there exists $\alpha_0 > 0$ such that for any $\alpha \geq \alpha_0$ there exists $L >0$ such that 
$$\limsup_{n \to \infty} \frac{1}{n} H(\tau_n^{\alpha, L} \vee \pi_n^{\alpha, L} \vee \sigma_n^{\alpha, L} \vee \beta_n^{\alpha, L}) \leq \frac{\log \alpha}{\alpha} + \frac{\log(\# W)}{\alpha} + \epsilon.$$
As a corollary, for any $\epsilon > 0$ there exist $\alpha, L >0$ such that 
$$\limsup_{n \to \infty} \frac{1}{n} H(\tau_n^{\alpha, L} \vee \pi_n^{\alpha, L} \vee \sigma_n^{\alpha, L} \vee \beta_n^{\alpha, L}) \leq \epsilon.$$
\end{proposition}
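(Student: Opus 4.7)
The plan is to bound the entropy of each of the four partitions separately and combine them via subadditivity of joins (Lemma~\ref{lem:joint}(1)). The bounds come in three qualitatively different flavors, the last of which is the main obstacle.

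\emph{Coarse counting for $\tau_n^{\alpha,L}$ and $\sigma_n^{\alpha,L}$.} Each of the $\lceil n/\alpha\rceil+1$ subintervals $I_{k,\alpha}$ contains at most one critical time, so recording the full sequence $(k_1,\ldots,k_r)$ amounts to choosing one of $\alpha+1$ outcomes (``no critical time'' or one of $\alpha$ positions) per subinterval. Hence $\tau_n^{\alpha,L}$ has at most $(\alpha+1)^{\lceil n/\alpha\rceil+1}$ classes, and Lemma~\ref{lem:joint}(2) gives $\limsup_n H(\tau_n^{\alpha,L})/n \leq \log(\alpha+1)/\alpha$; the $O(\alpha^{-2})$ excess over $\log\alpha/\alpha$ is absorbed into $\epsilon$. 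Similarly $\sigma_n^{\alpha,L}$ records at most $\lceil n/\alpha\rceil+1$ elements of the finite Weyl group $W$, giving $\limsup_n H(\sigma_n^{\alpha,L})/n \leq \log\#W/\alpha$.

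\emph{Polynomial bound for $\pi_n^{\alpha,L}$.} In a doubly-good pair one has $k_{j+1}-k_j\leq 2\alpha$ and every intervening step has length $\leq L$, so $\dist(w_{k_j}.o, w_{k_{j+1}}.o)\leq 2\alpha L$. Since $\pi_F$ is a composition of the closest-point projection onto $F$ (a $1$-Lipschitz map) with the isometric identification $F\cong\mathfrak{a}$, each summand of $p_n^{\alpha,L}$ has $\Vert\cdot\Vert_2$-norm at most $2\alpha L$, and summing over at most $n/\alpha$ doubly-good pairs yields $\Vert p_n^{\alpha,L}\Vert_2\leq 2nL$. Hence $p_n^{\alpha,L}$ takes at most $O((nL)^{d-1})$ values in $\mathbb{Z}^{d-1}$, so $H(\pi_n^{\alpha,L}) = O(\log n) = o(n)$.

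\emph{The main obstacle: $\beta_n^{\alpha,L}$.} A naive count would give $O(n\, H(\alpha_1))$, which is too large; the point is to exploit that bad intervals must be rare and hence should contribute negligibly. Let $\gamma_k$ be the partition recording the $3\alpha$ increments $(g_i)_{i\in J_{k,\alpha}}$ and $E_k$ the event ``$I_{k,\alpha}$ is $L$-bad''. Then $\beta_n^{\alpha,L}$ coincides with $\bigvee_k \gamma_k^{E_k}$ together with the two always-bad endpoint intervals (contributing $O(\alpha \,H(\alpha_1))=o(n)$). Each $\gamma_k$ is identically distributed with finite entropy $3\alpha\, H(\alpha_1)$, so Lemma~\ref{lem:uniform} produces, for any $\epsilon>0$, a single threshold $\delta=\delta(\alpha,\epsilon)>0$ such that $\overline{\PP}(E)<\delta$ implies $H(\gamma_k^E)<\alpha\epsilon$ for every $k$. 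To apply this with $E=E_k$, decompose $E_k\subseteq E_k^{(1)}\cup E_k^{(2)}$, where $E_k^{(1)}=\{\text{no critical time in }I_{k,\alpha}\}$ and $E_k^{(2)}=\{\text{some step in }I_{k,\alpha}\text{ has length }>L\}$: Lemma~\ref{lem:critical} bounds $\overline{\PP}(E_k^{(1)})<\delta/2$ uniformly in $k$ by taking $\alpha$ large, while $\overline{\PP}(E_k^{(2)})\leq\alpha\,\mu\{g:\dist(o,g.o)>L\}<\delta/2$ by taking $L$ large afterwards (since $\mu$ is a probability measure, its step-length tails tend to $0$). Summing over $k$ yields $H(\beta_n^{\alpha,L})\leq (n/\alpha)\cdot\alpha\epsilon + o(n) = n\epsilon+o(n)$.

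Combining the four bounds via Lemma~\ref{lem:joint}(1) gives the first inequality of the proposition, and the corollary follows by choosing $\alpha$ large enough that $\log\alpha/\alpha+\log\#W/\alpha<\epsilon/2$.
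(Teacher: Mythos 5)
Your overall strategy is exactly the paper's: bound each of the four partitions separately and combine by subadditivity, with a coarse count for $\tau_n^{\alpha,L}$ and $\sigma_n^{\alpha,L}$, a polynomial-range (hence $o(n)$) bound for $\pi_n^{\alpha,L}$, and a uniform-integrability argument for $\beta_n^{\alpha,L}$. Your treatment of $\pi_n^{\alpha,L}$ (the $2\alpha L$ bound on each doubly-good summand) is in fact slightly more careful than the paper's, and your union bound $\overline{\PP}(E_k^{(2)})\leq \alpha\,\mu\{g:\dist(o,g.o)>L\}$ with $L$ chosen last is consistent with the required quantifier order.

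There is, however, one genuine issue in your $\beta_n^{\alpha,L}$ estimate: a circular dependence between $\delta$ and $\alpha$. You apply Lemma~\ref{lem:uniform} to the \emph{block} partition $\gamma_k$, whose entropy $3\alpha H(\alpha_1)$ grows with $\alpha$, so the threshold you obtain is $\delta=\delta(\alpha,\epsilon)$; you then propose to make $\overline{\PP}(E_k^{(1)})<\delta/2$ ``by taking $\alpha$ large'' via Lemma~\ref{lem:critical}. But Lemma~\ref{lem:critical} only guarantees $\overline{\PP}(E_k^{(1)})<\epsilon'$ once $\alpha\geq k(\epsilon')$, so you need $\alpha\geq k\bigl(\delta(\alpha,\epsilon)/2\bigr)$ --- and if $\delta(\alpha,\epsilon)\to 0$ as $\alpha\to\infty$ (which Lemma~\ref{lem:uniform}, as stated for a fixed partition, does not rule out), this inequality may never be satisfied. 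The fix is to make $\delta$ independent of $\alpha$: either apply Lemma~\ref{lem:uniform} to the single-increment partition (entropy $H(\alpha_1)$, target $\epsilon$, threshold $\delta(\epsilon)$) and use $H(\gamma_k^{E_k})=H\bigl(\bigvee_{i\in J_{k,\alpha}}\gamma_{g_i}^{E_k}\bigr)\leq\sum_{i\in J_{k,\alpha}}H(\gamma_{g_i}^{E_k})\leq 3\alpha\epsilon$, which is what the paper does; or observe that since the information function of $\gamma_k$ is the sum of $3\alpha$ i.i.d.\ copies of $-\log\mu(g_1)$, the uniform-integrability threshold for the target $\alpha\epsilon$ can indeed be taken uniform in $\alpha$. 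With either repair the quantifiers untangle ($\delta$ depends only on $\epsilon$, then $\alpha_0$, then $L$) and your argument goes through.
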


\begin{proof}

There are at most $n/\alpha$ critical times and each of them has at most $\alpha$ values. 
Hence the entropy of the set of critical times is bounded by
\begin{equation} \label{E:tau-bound}
H(\tau_{n}^{\alpha, L}) \leq \frac{n}{\alpha} \log \alpha
\end{equation}

Note that, since symmetric spaces of non-compact type are CAT(0), and  closest point projection in CAT(0) spaces is distance non-increasing, 
by definition of $L$-good we have
$$\Vert \pi_{F}(w_{k_j}.o) - \pi_{F}(w_{k_{j+1}}.o) \Vert_2 \leq \dist(w_{k_j}.o, w_{k_{j+1}}.o) \leq L$$ 
so $p_n^{\alpha,L}(\omega)$ is a vector in $\mathbb{Z}^{d-1}$ of length  at most $L n$.

Hence, the entropy of $\pi_n^{\alpha, L}$ is bounded above by 
\begin{equation}\label{eq:entropy-projection}
H(\pi_n^{\alpha,L}) \leq  d \log(nL).
\end{equation}

Since there are at most $n/\alpha$ critical times and there are $\# W$ elements in the Weyl group, 
the entropy of 
$\sigma_{n, L}^{\alpha}$ is bounded by 
\begin{equation} \label{E:sigma-bound}
H(\sigma_{n, L}^{\alpha}) \leq \frac{n}{\alpha} \log(\# W).
\end{equation}

Finally, let us estimate the entropy of $\beta_n^{\alpha, L}$. 
First, given $\epsilon > 0$, let $\delta > 0$ be determined by Lemma \ref{lem:uniform} applied to 
the random variable $g_1$, which has finite entropy since $\mu$ does. 
Now, from Lemma \ref{lem:critical} there exists $\alpha_0$ such that 
for any $\alpha \geq \alpha_0$ 
$$\mathbb{P}(I_{k, \alpha} \textup{ contains no critical time}) \leq \delta$$
for any $0 < k < \lfloor n/\alpha \rfloor$. 
Hence, by choosing $L$ large enough, one can also have condition (2) of Definition \ref{def:good-times} holds, 
hence we obtain 
$$\mathbb{P}(I_{k, \alpha} \textup{ is bad}) \leq \delta/2$$
for any $0 < k < \lfloor n/\alpha \rfloor$. 
Then, by Lemma \ref{lem:uniform} we have  
$$H(g_i \mathbf{1}_{I_{k, \alpha} \textup{ is bad}}) \leq \epsilon$$
for any for any $0 < k < \lfloor n/\alpha \rfloor$ and $i \in I_{k, \alpha}$, hence, 
by also taking into account that the first and last intervals are declared to be bad,  
\begin{equation} \label{E:beta-bound}
H(\beta_n^{\alpha, L}) \leq \epsilon n + 2 \alpha H(g_1).
\end{equation}
The claim follows by combining \eqref{E:tau-bound}, \eqref{eq:entropy-projection}, \eqref{E:sigma-bound}, and \eqref{E:beta-bound}, and taking the limsup as $n \to \infty$.
\end{proof}

\begin{proposition} \label{P:pin-down}
Suppose that $\mu$ is a totally irreducible and bi-contracting probability measure on a discrete subgroup $\Gamma$ of $G$. Then for every $\alpha\geq 1$ and $L>0$, the joint partitions $\tau_n^{\alpha,L}, \pi_n^{\alpha,L}, \sigma_n^{\alpha,L}$, and $\beta_n^{\alpha,L}$ pin down the conditional location of the random walk at time $n$; that is, 
$$
\lim_{n \to \infty} \frac{1}{n} H_{B \times B}(\alpha_n |\tau_n^{\alpha,L} \vee \pi_n^{\alpha,L}  \vee  \sigma_n^{\alpha,L} \vee \beta_n^{\alpha,L} ) = 0.
$$
\end{proposition}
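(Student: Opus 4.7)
The plan is to show that, conditionally on a boundary point $(b_-, b_+) \in B \times B$ together with the four partitions $\tau_n^{\alpha, L}, \pi_n^{\alpha, L}, \sigma_n^{\alpha, L}, \beta_n^{\alpha, L}$, the random walk value $w_n$ is pinned down to at most polynomially many possibilities in $n$, yielding conditional entropy $O(\log n) = o(n)$.

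I would first fix $(b_-, b_+) \in \mathcal{O}_{w_0}$, which by \eqref{E:flags-to-flats} determines the oriented flat $F = \Phi(b_-, b_+) = gA.o$ together with a reference $g \in G$ satisfying $g.o = \textup{proj}_F(o)$. From $\tau_n^{\alpha, L}$ one reads the critical times $k_1 < \cdots < k_r$. From $\beta_n^{\alpha, L}$ one reads every increment lying in a bad interval or in one adjacent to a bad interval; a short case analysis shows that whenever the index $j$ is not doubly good there is a bad interval touching the range $[k_j, k_{j+1}]$, so every increment in that range is recorded and the element $g_{[j]} := w_{k_j}^{-1} w_{k_{j+1}}$ is known.

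The heart of the argument is to recover, for each non-doubly-good $j$, the signed flat displacement $D_j := \pi_F(w_{k_{j+1}}.o) - \pi_F(w_{k_j}.o) \in \mathfrak{a}$. Since $w_{k_j}.o$ and $w_{k_{j+1}}.o$ each lie within distance $M$ of $F$ (as $k_j, k_{j+1}$ are critical), applying Lemma~\ref{D-properties} twice yields
\[
\|r(g_{[j]}) - D(\textup{proj}_F(w_{k_j}.o), \textup{proj}_F(w_{k_{j+1}}.o))\|_2 \leq 2M.
\]
Writing the two flat projections as $g a_j.o$ and $g a_{j+1}.o$, the right-hand side equals $r(a_j^{-1} a_{j+1}) = \sigma(D_j).D_j \in \overline{\mathfrak{a}^+}$, the Weyl reduction of $D_j$. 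Since $\sigma(D_j)$ acts on $\mathfrak{a}$ by an isometry, one deduces
\[
\|D_j - \sigma(D_j)^{-1}.r(g_{[j]})\|_2 \leq 2M,
\]
so $r(g_{[j]})$ read from $\beta_n^{\alpha, L}$ together with $\sigma(D_j)$ read from $\sigma_n^{\alpha, L}$ determine each non-doubly-good $D_j$ up to $O(M)$.

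Telescoping gives $\pi_F(w_{k_r}.o) - \pi_F(o) = \sum_{j=0}^{r-1} D_j$: the sum over doubly-good $j$ is read from $\pi_n^{\alpha, L}$ up to $O(1)$ rounding, while the sum over the at most $N_{\mathrm{bad}} \leq n/\alpha$ non-doubly-good indices is reconstructed above with total error $O(Mn/\alpha) = O(n)$. Thus $\pi_F(w_{k_r}.o)$ is known to lie in an $O(n)$-ball in $\mathfrak{a} \cong \mathbb{R}^{d-1}$, while $w_{k_r}.o$ itself lies within $M$ of $F$. By the $1$-Lipschitz property of $\textup{proj}_F$ on the CAT(0) space $S$, the preimage of a unit ball in $F$, restricted to the $M$-neighborhood of $F$, has diameter $O(M)$, so discreteness of $\Gamma$ yields $O(1)$ elements per unit ball; covering the $O(n)$-ball by $O(n^{d-1})$ unit balls yields $O(n^{d-1})$ candidates for $w_{k_r}$ in total. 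The increments from $k_r$ to $n$ sit in the last bad interval or one adjacent to it and hence are encoded in $\beta_n^{\alpha, L}$, so $w_n$ is determined by $w_{k_r}$ and the data. Combining, $H_{B \times B}(\alpha_n \mid \tau_n^{\alpha, L} \vee \pi_n^{\alpha, L} \vee \sigma_n^{\alpha, L} \vee \beta_n^{\alpha, L}) \leq (d-1)\log n + O(1) = o(n)$. The main obstacle will be the geometric identification in the second step: matching the generalized distance $D$ restricted to $F \times F$ with the Weyl reduction $\sigma(D_j).D_j$ of the flat-valued vector $D_j$, and using $\sigma_n^{\alpha, L}$ to undo this reduction so as to recover the signed displacement. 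This step has no analogue in rank one, where $\mathfrak{a}$ is one-dimensional and the Weyl group is essentially a sign.
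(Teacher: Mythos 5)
Your proposal is correct and follows essentially the same route as the paper's proof: recovering critical times from $\tau_n$, reading the doubly-good displacements from $\pi_n$, reconstructing each non-doubly-good displacement from the recorded increments via the radial part of $w_{k_j}^{-1}w_{k_{j+1}}$ together with the Weyl-group datum from $\sigma_n$, telescoping, and using discreteness of $\Gamma$ near the flat to get polynomially many candidates for $w_{k_r}$ and hence for $w_n$. The only loose point is the $j=0$ term in your telescoping sum (the basepoint $o$ need not be within $M$ of $F$, so your $2M$-estimate does not apply to it); but since the first interval is declared bad, $\beta_n^{\alpha,L}$ records all increments up to $k_1$, so $w_{k_1}$ and hence $\pi_F(w_{k_1}.o)$ are known exactly, which is how the paper anchors the telescoping.
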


\begin{proof}
We fix a pair of transverse boundary points $(b_-, b_+)$ in the Furstenberg boundary $B=G/P$. Suppose that the bilateral sample path $\omega =(w_i)_{i \in \mathbb{Z}}$ converges to the pair of boundary points, and let $g \in G$ be such that $F = g A.o$ is the associated oriented flat.

\medskip

Let $0 \leq k_1 < \dots < k_r \leq n$ be the critical times, so that $k_r$ is the last critical  time before $n$. 
We claim that given the partitions $\tau_n^{\alpha,L}, \beta_n^{\alpha,L}$, and $\pi_n^{\alpha,L}$ is sufficient to compute  $\pi_{F}(w_{k_r}.o)$, up to an error of at most $n/\alpha$, 
which gives rise to at most $(n/\alpha)^d$ choices for $w_{k_{r}}.o$.

Suppose that $k_j < k_{j+1}$ are two consecutive critical times. If the index $j$ is doubly good, then $k_j, k_{j+1}$ lie in consecutive good intervals, and the value
$$\pi_{F}(w_{k_{j+1}}.o) -  \pi_{F}(w_{k_{j}}.o)$$
is one of the summands of the good projection $p_n^{\alpha,L}$. 

If not, then there exist two elements $a_j, a_{j+1} \in A$ such that $ga_j.o = \textup{proj}_F(w_{k_j}.o)$ and $ga_{j+1}.o = \textup{proj}_F(w_{k_{j+1}}.o)$ on $F$ are within distance $M$, respectively, of $w_{k_j}.o$ and $w_{k_{j+1}}.o$. 
Since all increments between $k_j$ and $k_{j+1}$ are given via  the partition $\beta_n^{\alpha, L},$ and since $D$ is $G$-invariant, we know  
$$
D(w_{k_j}.o, w_{k_{j+1}}.o) = D(o, g_{k_j+1} g_{k_j+2} \dots g_{k_{j+1}}.o).
$$
By Lemma \ref{D-properties} and $G$-invariance,
$$
D(o, a_j^{-1} a_{j+1}.o) = D(ga_j.o, g a_{j+1}.o) = D(w_{k_j}.o, w_{k_{j+1}}.o) + O(1).
$$
Thus we know the radial part of $a_j^{-1} a_{j+1}$ up to a uniform additive error. 
Note that any element $a \in A$ is determined by the pair $(r(a), \sigma(\log a))$, hence setting
$$v_j:= \log(a_j^{-1} a_{j+1}) = \pi_{F}(w_{k_{j+1}}.o) -  \pi_{F}(w_{k_{j}}.o)$$ 
and noting that $\sigma(v_j)$ is given via the partition $\sigma_n^{\alpha,L}$, we obtain that we also know 
the value of
\begin{equation} \label{E:not-doubly-good}
v_j = \pi_{F}(w_{k_{j+1}}.o) -  \pi_{F}(w_{k_{j}}.o)
 \end{equation}
up to uniform additive error. 
Then 
\begin{align*}
\pi_{F}(w_{k_r}.o) - \pi_{F}(w_{k_1}.o) & = \sum_{j=1}^{r-1} \left(\pi_{F}(w_{k_{j+1}}.o) -  \pi_{F}(w_{k_{j}}.o) \right)\\
& = \sum_{j \in \mathcal{DG}} \left(\pi_{F}(w_{k_{j+1}}.o) -  \pi_{F}(w_{k_{j}}.o) \right) + \sum_{j \notin \mathcal{DG}} \left(\pi_{F}(w_{k_{j+1}}.o) -  \pi_{F}(w_{k_{j}}.o) \right)
\end{align*}
and the first term is the good projection, up to $O(1)$, while the second term is the sum of the previous contributions from \eqref{E:not-doubly-good}, each up to an additive error. Since the number of such terms is bounded above by the number of critical times, the error is an additive error of at most $O(n/\alpha)$.
Hence, we know the vector 
$$\lfloor \pi_{F}(w_{k_r}.o) - \pi_{F}(w_{k_1}.o) \rfloor \in \mathbb{Z}^{d-1}$$
up to at most $O((n/\alpha)^{d-1})$ choices. 

Finally, since the first interval is bad by definition, and we record via $\beta_{n}^{\alpha, L}$ all increments up to and including the first good interval, 
we know $w_{k_1}$, hence we also know $\pi_F(w_{k_1}.o)$.

By using the knowledge of $\lfloor \pi_{F}(w_{k_r}.o) - \pi_{F}(w_{k_1}.o) \rfloor$ we now obtain the location of $\pi_{F}(w_{k_r}.o)$ up to $O((n/\alpha)^{d-1})$ choices, and we know that $w_{k_r}.o$ lies within a ball of radius $M$ of $\textup{proj}_{F}(w_{k_r}.o)$.
Hence, by using that the action of $\Gamma$ on the symmetric space is discrete, we reconstruct $w_{k_r} \in \Gamma$ up to $O((n/\alpha)^{d-1})$ choices.

Moreover, since the last interval is bad by definition, and we record via $\beta_{n}^{\alpha, L}$ all increments after the last good interval, we know all increments between $w_{k_r}$ and $w_n$, hence we know $w_{k_r}^{-1} w_n$. 

Thus, we pin down $w_n$ up to $O((n/\alpha)^{d-1})$ choices. By taking the $\log$ and the limit as $n \to \infty$, we obtain the claim.
\end{proof}

Let us now state and prove the main theorem of this paper, in its most general form. 

\begin{theorem} \label{T:main}
Let $G$ be a semisimple, connected, Lie group with finite center, and let $\Gamma < G$ be a discrete subgroup. 
Let $\mu$ be a totally irreducible, bi-contracting probability measure on $\Gamma$, with finite entropy. 
Then the Furstenberg boundary $(G/P, \nu)$ with the hitting measure $\nu$ is the Poisson boundary for $(\Gamma, \mu)$. 
\end{theorem}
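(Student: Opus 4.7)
The plan is to apply Kaimanovich's entropy criterion (Theorem \ref{thm:maximal}) to the $\mu$-boundary $(B,\nu) = (G/P,\nu)$ produced by Theorem \ref{T:converge}. By that criterion, since $\mu$ has finite entropy, it suffices to show that the conditional asymptotic entropy
$$h_B = \lim_{n \to \infty} \frac{H_B(\alpha_n)}{n}$$
is zero, so the whole argument reduces to a single entropy estimate.

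First, I would pass to the bilateral path space $(\overline{\Omega},\overline{\mathbb{P}})$ in order to exploit both boundary limits simultaneously. In this setting, the partition $\alpha_n$ and the forward limit $Z$ are measurable with respect to the forward increments $(g_i)_{i\geq 1}$, while the backward limit $\check{Z}$ is measurable with respect to $(g_i)_{i \leq 0}$, and these two sigma-algebras are independent. Consequently, conditioning on $\check{Z}$ in addition to $Z$ does not alter the conditional distribution of $\alpha_n$, so
$$H_B(\alpha_n) = H_{B \times B}(\alpha_n),$$
where the right-hand side is computed with respect to the bilateral hitting measure $\nu \otimes \check{\nu}$ on $B \times B$.

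Next, I would combine the two main technical inputs already in place. For the joint partition
$$\gamma_n^{\alpha,L} := \tau_n^{\alpha,L} \vee \pi_n^{\alpha,L} \vee \sigma_n^{\alpha,L} \vee \beta_n^{\alpha,L},$$
the standard inequality for conditional entropy yields
$$H_{B \times B}(\alpha_n) \leq H_{B \times B}(\alpha_n \mid \gamma_n^{\alpha,L}) + H(\gamma_n^{\alpha,L}).$$
Proposition \ref{P:pin-down} ensures that the first term is $o(n)$ for any fixed $\alpha, L$, while Proposition \ref{E:entropy-upper-bound} ensures that for every $\epsilon > 0$ one can choose $\alpha$ and $L$ so that $\limsup_{n \to \infty} H(\gamma_n^{\alpha,L})/n \leq \epsilon$. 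Combining these two facts with the bilateral identity above gives $\limsup_{n \to \infty} H_B(\alpha_n)/n \leq \epsilon$ for every $\epsilon > 0$, hence $h_B = 0$, and Theorem \ref{thm:maximal} delivers the identification of $(G/P, \nu)$ as the Poisson boundary of $(\Gamma,\mu)$.

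At this stage of the paper the deduction is essentially formal: the entire geometric and probabilistic content has already been packed into the construction of the critical times, the good projection, and the four partitions $\tau_n, \pi_n, \sigma_n, \beta_n$, together with the two entropy estimates. The only point requiring any care is the bilateral reduction step, and this is standard from the independence of forward and backward increments; it is not a genuine obstacle, but it is the one place where writing things out carefully in the final proof is worthwhile.
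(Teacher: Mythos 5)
Your proposal is correct and follows essentially the same route as the paper's own proof: reduce to the bilateral conditional entropy $H_{B\times B}(\alpha_n)$ via independence of the forward and backward increments, split it using the joint partition $\gamma_n^{\alpha,L}$, and invoke Proposition \ref{P:pin-down} for the conditioned term and Proposition \ref{E:entropy-upper-bound} for $H(\gamma_n^{\alpha,L})$ before applying Theorem \ref{thm:maximal}. (You in fact attribute the two estimates to the correct propositions, whereas the paper's write-up of this step inadvertently swaps the two citations.)
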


Note that the condition on the measure $\mu$ holds if the semigroup generated by the support of $\mu$ is a Zariski-dense subgroup of $G$ (\cite{Goldsheid-Margulis}), immediately yielding Theorem \ref{T:main-intro} as a corollary. 

\begin{proof}[Proof of Theorem \ref{T:main}]
Let $\mu$ be a  totally irreducible and bi-contracting measure on $\Gamma$. Recalling that $(\overline{\Omega}, \overline{\mathbb{P}})$ is the space of bilateral infinite sample paths, we have defined in Eq. \eqref{E:double-boundary} a measurable map 
$$(\overline{\Omega}, \overline{\mathbb{P}}) \to (B \times B, \nu \otimes \check{\nu})$$ 
to the double Furstenberg boundary, where $B = G/P$ and $\nu$ and $\check{\nu}$ denote, respectively, the hitting measure of the forward and backward random walks. This shows that $(B, \nu)$ is a $\mu$-boundary for $(\Gamma, \mu)$, and we need to prove that it is maximal.

If we let $\gamma_{n}^{\alpha, L} := \tau_n^{\alpha,L} \vee \pi_n^{\alpha,L}  \vee  \sigma_n^{\alpha,L} \vee \beta_n^{\alpha,L}$, then the monotonicity properties of conditional entropy yield
$$\frac{1}{n} H_{B \times B}(\alpha_n) \leq \frac{1}{n} H_{B \times B}(\alpha_n \ | \ \gamma_n^{\alpha, L}) + \frac{1}{n} H(\gamma_n^{\alpha, L})$$
Now, by Proposition \ref{E:entropy-upper-bound}, for any $\epsilon >0$ there exist $\alpha, L >0$ such that 
$$\limsup_{n \to \infty} \frac{1}{n} H_{B \times B}(\alpha_n \ | \ \gamma_{n}^{\alpha, L}) \leq \epsilon$$
while  by Proposition \ref{P:pin-down} for any $\alpha, L >0$ we have
 $$\limsup_{n \to \infty} \frac{1}{n} H(\gamma_{n}^{\alpha, L}) =0$$
hence 
$$\lim_{n \to \infty} \frac{1}{n} H_{B \times B}(\alpha_n) = 0.$$
Then, noting that the backward and forward random walks are independent, and $\alpha_n$ depends only on the forward walk, we have 
$\mathbb{P}^{(b_-, b_+)}(A) = \mathbb{P}^{(b_+)}(A)$ for any $A \in \alpha_n$.
\begin{align*}
H_{B \times B}(\alpha_n) & = - \int_{B \times B} \sum_{A \in \alpha_n} \mathbb{P}^{(b_-, b_+)}(A) \log \mathbb{P}^{(b_-, b_+)}(A) \ d\check{\nu}(b_-) \ d\nu(b_+) \\
& = - \int_{B \times B} \sum_{A \in \alpha_n}  \mathbb{P}^{(b_+)}(A) \log \mathbb{P}^{(b_+)}(A) \ d\check{\nu}(b_-) \ d\nu(b_+) \\
& =  H_{B}(\alpha_n)
\end{align*}
hence also 
$$\lim_{n \to \infty} \frac{1}{n} H_{B}(\alpha_n) = 0$$
which implies by Theorem \ref{thm:maximal} that $(B, \nu)$ is the Poisson boundary. 
\end{proof}

\bibliographystyle{amsalpha}
\bibliography{ref}
\end{document}